\documentclass[10pt,a4paper]{amsart}
\usepackage[T1]{fontenc}
\usepackage{fullpage} 
\usepackage{amsmath}
\usepackage{amsthm}
\usepackage{amssymb}
\usepackage{mathtools}
\usepackage[only,mapsfrom]{stmaryrd}
\usepackage{graphicx}
\usepackage{cancel} 
\usepackage{color} 
\usepackage{nccmath}
\usepackage[all]{xy} 
\usepackage{tikz}
\usetikzlibrary{arrows,calc,positioning}
\usetikzlibrary{cd}
\usepackage{nicefrac}
\usepackage{enumitem}
\usepackage[colorlinks,citecolor=blue,linkcolor=blue,urlcolor=blue,filecolor=blue,breaklinks]{hyperref}
\usepackage{caption}
\usepackage{comment}
\usepackage{tensor}

\numberwithin{equation}{section}
\numberwithin{figure}{section}

\newtheorem{thm}{Theorem}[section]
\newtheorem{athm}{Theorem}

\newtheorem{lem}[thm]{Lemma}
\newtheorem{prop}[thm]{Proposition}
\newtheorem{cor}[thm]{Corollary}

\newtheorem*{thm*}{Theorem}
\newtheorem*{conj*}{Conjecture}
\newtheorem*{cor*}{Corollary}
\newtheorem*{ques*}{Question}
\newtheorem*{claim*}{Claim}

\theoremstyle{definition}
\newtheorem{rem}[thm]{Remark}
\newtheorem{defn}[thm]{Definition}

\newtheorem{ex}[thm]{Example}

\newtheorem*{rem*}{Remark}

\newcommand{\cC}{\mathcal{C}}

\newcommand{\cP}{\mathcal{P}}

\newcommand{\bQ}{\mathbb{Q}}

\newcommand{\bZ}{\mathbb{Z}}

\newcommand{\RFRS}{\mathrm{RFRS}}

\newcommand{\incl}[3][right]%
{%
\draw[<-,>=#1 hook] #2 to ($ #2!0.5!#3 $);
\draw[->] ($ #2!0.5!#3 $) to #3;%
}
\newcommand{\inclusion}[5][right]%
{%
\draw[<-,>=#1 hook] #4 to ($ #4!0.5!#5 $) node[#2,font=\small]{#3};
\draw[->,>=stealth'] ($ #4!0.5!#5 $) to #5;%
}

\renewcommand{\geq}{\geqslant}
\renewcommand{\leq}{\leqslant}

\title{Virtual inheritance properties of graph products}

\author{Xiaoming Huang}
\address{School of Mathematical Sciences, Fudan University, No. 220 Handan Road, Shanghai, 200433, China}
\email{xmhuang24@m.fudan.edu.cn}

\author{Xiaolei Wu}
\address{Shanghai Center for Mathematical Sciences, Jiangwan Campus, Fudan University, No. 2005 Songhu Road, Shanghai, 200438, P.R. China}
\email{xiaoleiwu@fudan.edu.cn}

\author{Shengkui Ye}
\address{NYU Shanghai, No.567 Yangsi West 
  Rd, Pudong New Area, Shanghai, 200124, P.R. China \\
NYU-ECNU Institute of Mathematical Sciences at NYU Shanghai, 3663 Zhongshan Road North, Shanghai, 200062, China}
\email{sy55@nyu.edu}

\subjclass[2020]{20F65}
\keywords{RFRSness, graph products, virtual properties of groups, poly-free group, normally poly-free group}
\date{May 2026}

\begin{document}

\begin{abstract}
We prove that many virtual properties are closed under taking graph products, including: virtually RFRS, virtually (compact) special, virtually CAT(0) cube, and virtually normally poly-free. Our proof uses Januszkiewicz and   \'Swi\k{a}tkowski's strong commensurability theorem for graph products, for which we provide an elementary proof.

\end{abstract}
\maketitle

\section*{Introduction}

Graph products of groups were first introduced by Green in \cite{Green90} as a common generalization of free products and direct products. Let $\Gamma$ be a simplicial graph with vertex set $V(\Gamma)$ and let $\{G_v\}_{v\in V(\Gamma)}$ be a collection of groups. The graph product $\Gamma G_v$ is obtained from the free product $\ast_{v\in V(\Gamma)}G_v$ by adding the relations that elements of adjacent vertex groups commute. Thus graph products interpolate naturally between free products and direct products. Two particularly important classes of examples are right-angled Artin groups and right-angled Coxeter groups, obtained respectively by taking all vertex groups to be infinite cyclic or cyclic of order two.

Graph products play a central role in geometric group theory. Their subgroup structure, residual properties, automorphism groups, and geometric actions have been studied extensively; see for example \cite{Green90,MeierVanWyk95,HsuWise1999,Charney07,AntolinMinasyan15,genevois2017cubical}. One basic theme in the study of graph products is the investigation of inheritance properties. A large number of group theoretic properties are known to be preserved under taking graph products. For instance, graph products preserve residual finiteness, linearity, soficity, orderability; see  Theorem \ref{thm:graph-prod-sumry} for a summary. 

The main purpose of this paper is to study several  virtual inheritance properties for graph products. Given a group-theoretic property $P$, a group is called \emph{virtually $P$} if it has a finite index subgroup that has $P$. An important tool for studying virtual properties of graph products is Januszkiewicz and   \'Swi\k{a}tkowski's strong commensurability theorem  \cite[Theorem 1]{JanSwi01}. Recall that two groups are said to be \emph{commensurable} if they share a common subgroup of finite index; they are \emph{strongly commensurable} if the indices are the same and \emph{strongly $d$-commensurable} if the common index is $d$. 
Our first observation is the following (see Proposition \ref{lem:common-finite-index-subgroup} for a general form).

\begin{athm} \label{thm:amal-comm}
Let $G_1 = (D_1 \times C) \ast_C B, G_2 = (D_2 \times C) \ast_C B$ be two amalgamated products of groups. Suppose that $C$ is a retract of $B$, and that \(D_1, D_2\) have a common subgroup \(H\) of the same index \(n\). Then $G_1,G_2$
have a common subgroup of index \(n\).    
\end{athm}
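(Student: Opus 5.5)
The plan is to find, inside each $G_i$, a subgroup of index $n$ whose structure depends only on $H$, $n$, $B$ and $C$. I will then compute these subgroups with Bass--Serre theory and check that they are isomorphic.

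\textbf{Step 1: the index-$n$ subgroups.} Define $\varphi_i\colon G_i\to D_i$ as follows. On $D_i\times C$ it is the projection onto $D_i$, and on $B$ it is trivial. The two definitions agree on the amalgamated subgroup $C$, since both send $C$ to $1$, so $\varphi_i$ is a well-defined surjective homomorphism. Put $N_i=\ker\varphi_i$ and $K_i=\varphi_i^{-1}(H)$. Then $[G_i:K_i]=[D_i:H]=n$. Moreover $G_i=N_iD_i$ and $K_i=N_iH$.

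\textbf{Step 2: the graph-of-groups decomposition of $K_i$.} Let $T_i$ be the Bass--Serre tree of $G_i=(D_i\times C)\ast_C B$. Its vertices are the cosets $g(D_i\times C)$ and $gB$, and its edges are the cosets $gC$. I compute the quotient graph of groups $K_i\backslash T_i$.
\begin{itemize}
\item The double cosets $K_i\backslash G_i/(D_i\times C)$ form a single orbit, because $K_i(D_i\times C)\supseteq N_iD_i=G_i$. The corresponding stabilizer is $K_i\cap(D_i\times C)=H\times C$.
\item Fix coset representatives $t_1=1,\dots,t_n\in D_i$ for $H\backslash D_i$. Since $G_i=N_iD_i$ and $N_i\supseteq B$, the double cosets $K_i\backslash G_i/B$ are in bijection with $H\backslash D_i$. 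The representatives are $t_j$, with stabilizers $K_i\cap t_jBt_j^{-1}=t_jBt_j^{-1}\cong B$, because $t_jBt_j^{-1}\subseteq N_i$.
\item In the same way, the edges give $n$ orbits, represented by $t_jC$, each with stabilizer $t_jCt_j^{-1}=C$. Here I use that $t_j\in D_i$ commutes with $C$.
\end{itemize}
So $K_i\backslash T_i$ is a star: one central vertex with group $H\times C$, and $n$ edges with group $C$ going to $n$ vertices with group $B$. The edge inclusions are as follows.
\begin{itemize}
\item Into $H\times C$, the edge group maps by the standard inclusion $C\hookrightarrow H\times C$. Conjugation by $t_j$ is trivial on $C$, so no twisting appears.
\item Into $t_jBt_j^{-1}$, the edge group $C=t_jCt_j^{-1}$ maps by the conjugate of the given inclusion $C\hookrightarrow B$.
\end{itemize}
Hence
\[
K_i\;\cong\;(H\times C)\ast_C\bigl(B\ast_C B\ast_C\cdots\ast_C B\bigr)\quad(n\text{ copies of }B),
\]
where all copies of $C$ are identified with the $C$-factor of $H\times C$ via the given embeddings.

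\textbf{Step 3: conclusion.} The description in Step 2 involves only $H$, $C$, $B$, $n$ and the fixed embedding $C\hookrightarrow B$. It does not depend on $D_i$, so $K_1\cong K_2$. This gives a common subgroup of index $n$ in both $G_1$ and $G_2$.

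\textbf{Main obstacle.} The delicate point is the bookkeeping in Step 2. One must verify that the edge-to-vertex maps in the quotient graph of groups are exactly the standard ones, since a twisted identification could break the isomorphism $K_1\cong K_2$. What makes this work is that $D_i$ centralizes $C$, so conjugating by the coset representatives $t_j$ does not alter $C$. This argument does not seem to need the hypothesis that $C$ is a retract of $B$. If a gap appears, I would use the retraction $r\colon B\to C$ to write down an explicit isomorphism $K_1\to K_2$ on generators as a fallback. It may also be that the retraction is only needed for the more general Proposition~\ref{lem:common-finite-index-subgroup}.
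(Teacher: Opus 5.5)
Your proof is correct and is essentially the paper's. The paper deduces this statement from Proposition~\ref{lem:common-finite-index-subgroup} with $A_i=D_i\times C$, and that proof is exactly your Bass--Serre computation: a star of groups with centre $H\times C$, $n$ outer vertex groups $B$ and edge groups $C$, with no twisting because the coset representatives centralize $C$. The only difference is the map defining $K_i$: the paper uses $\pi_i\colon G_i\to D_i\times C$, built from the retraction $r$, and sets $K_i=\pi_i^{-1}(H\times C)$. Your $\varphi_i$ is $\pi_i$ followed by the projection to $D_i$, so $\varphi_i^{-1}(H)$ is literally the same subgroup. You are therefore right that the retraction is never needed here. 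The same trick, sending $A_i\to A_i/C$ and $B$ to the identity, removes the retraction hypothesis from the general proposition as well.
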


As an application, we recover Januszkiewicz and   \'Swi\k{a}tkowski's strong commensurability theorem  \cite[Theorem 1]{JanSwi01}: for two graph products over the same underlying graph, if vertex groups are strongly commensurable, then the two graph products are strongly commensurable (see Corollary \ref{thm:strong-com-gp} for details).  
The original proof of Januszkiewicz and   \'Swi\k{a}tkowski uses right-angled buildings associated to graph products. Our proof is short and elementary using only Bass--Serre theory.   We then use the strong commensurability theorem to prove many properties of groups are closed under taking graph products. Our main result can be stated as the following.

\begin{athm}
\label{thm:intro-virtual}
The following classes of groups are closed under taking graph products:
\begin{enumerate}
    \item (\ref{cor:graph-prod-pass-sub}) Virtually RFRS groups;
    \item (\ref{cor:graph-prod-pass-sub}) Virtually special groups;
    \item (\ref{cor:poly-free} or \ref{cor:graph-prod-pass-sub}) Virtually normally poly-free groups;
    \item (\ref{thm:graph-product}) Virtually compact special groups;
    \item (\ref{thm:graph-product}) Virtually CAT$(0)$ groups;
    \item (\ref{thm:graph-product}) Virtually CAT$(0)$ cube groups;

    \item (\ref{lem:swl}) Groups with positive (resp. discrete) stable word length.
   
\end{enumerate}
\end{athm}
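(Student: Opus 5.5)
The plan is to reduce every item to one mechanism. Given finite-index subgroups $H_v\leq G_v$ with property $P$, we produce a finite-index subgroup of $\Gamma G_v$ that is itself a graph product, or a subgroup of one, built from groups with $P$. The tool is the strong commensurability theorem (Corollary \ref{thm:strong-com-gp}), which we derive from Theorem \ref{thm:amal-comm} by induction on $|V(\Gamma)|$.

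For the derivation, pick a vertex $v$ and write
\[
\Gamma G_v=(G_v\times \Gamma_{\mathrm{lk}(v)}G)\ast_{\Gamma_{\mathrm{lk}(v)}G}\Gamma_{\Gamma\setminus v}G .
\]
The link subgroup is a retract of $\Gamma_{\Gamma\setminus v}G$, because we can kill the vertex groups outside the link. Replacing the vertex groups one at a time and applying Theorem \ref{thm:amal-comm} at each step shows that graph products over $\Gamma$ whose vertex groups are strongly commensurable are strongly commensurable.

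Next we equalize indices. Set $d=\prod_v[G_v:H_v]$ and $G'_v=H_v\times F_v$ with $F_v$ finite of order $d/[G_v:H_v]$. Then $G_v$ and $G'_v$ are both strongly $d$-commensurable with $H_v^{\,d'}$-type common subgroups; more simply, $H_v\times\{1\}\leq G'_v$ has index $d/[G_v:H_v]$. To get identical indices we instead compare $G_v\times F$ and $H_v\times F'$ with $|F|\,[G_v:H_v]=|F'|$, choosing cyclic $F$ and $F'$ so that both have $H_v$ as a subgroup of the common index $d$.

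It follows that $\Gamma G_v$ is commensurable with $\Gamma(H_v\times F'_v)$. We also need $\Gamma G_v$ to embed with finite index in a graph product over $\Gamma$ with vertex groups $G_v\times F_v$, and this holds: $\Gamma(G_v\times F_v)$ retracts onto $\Gamma G_v$, and the kernel of the map to $\prod F_v$ handles the finite factors. So a finite-index subgroup of $\Gamma G_v$ embeds with finite index in $\Gamma(H_v\times F'_v)$. The group $\Gamma(H_v\times F'_v)$ in turn contains with finite index the kernel of its projection to $\prod_v F'_v$, which is a subgroup of a graph product whose vertex groups are the $H_v$ together with finite groups. We handle that last piece by the standard fact that such a kernel is a graph product of copies of the $H_v$ over a finite cover graph, proved by Bass--Serre or Davis-complex arguments, or alternatively via residual finiteness of finite-group graph products, which are virtually RACG-like and hence virtually special.

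For each property we then invoke the known non-virtual inheritance. RFRS, special, compact special, CAT(0), CAT(0) cube, and normally poly-free are all preserved by graph products (Theorem \ref{thm:graph-prod-sumry} and the references therein) and pass to finite-index subgroups. Special and compact special need Haglund--Wise closure under graph products, via the polyhedral product complex. For CAT(0) we use the CAT(0) polyhedral product of the spaces $X_v$. Stable word length is a commensurability invariant up to constants and is handled directly by retractions onto vertex groups and their centralizers.

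The main obstacle is the finite-vertex-group kernel. We must show that the kernel of $\Gamma(H_v\times F_v)\to\prod F_v$ is again a graph product of conjugates of the $H_v$, over a finite graph, possibly with extra direct factors. The first attempt will be to realise this kernel as the fundamental group of a finite cover of the polyhedral product complex and to read off its structure from there.
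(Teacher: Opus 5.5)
Your comparison step is fine if you take $F_v$ trivial: $\Gamma G_v$ and $\Gamma(H_v\times \bZ/d_v\bZ)$, with $d_v=[G_v:H_v]$, are strongly commensurable by Corollary \ref{thm:strong-com-gp}, exactly as in Theorem \ref{thm:graph-product}. The product $d=\prod_v[G_v:H_v]$ and the auxiliary $F_v$ are unnecessary. Your assertion that $\Gamma G_v$ has finite index in $\Gamma(G_v\times F_v)$ is false: $\Gamma G_v$ lies in the kernel of $\Gamma(G_v\times F_v)\to\Gamma F_v$, and $\Gamma F_v$ is infinite once two non-adjacent $F_v$ are nontrivial.

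The genuine gap is the step you call the main obstacle. It is not just unproved; it is false that $\ker(\Gamma(H_v\times F_v)\to\prod_v F_v)$ is a graph product of copies of the $H_v$ over a finite graph, even allowing extra factors. Take $\Gamma$ a $5$-cycle, all $H_v=1$ (allowed, since finite groups are virtually $P$) and $F_v=\bZ/2\bZ$. Then $\Gamma(H_v\times F_v)$ is the reflection group of a right-angled hyperbolic pentagon. The kernel of its map to $(\bZ/2\bZ)^5$ is torsion-free of index $32$, i.e.\ the fundamental group of a closed orientable surface of genus $5$. That group is neither trivial, nor a RAAG, nor a direct product. Since $\Gamma(H_v\times F_v)$ has torsion, properties (1)--(3) can only hold on a torsion-free finite-index subgroup, and without the structural claim you cannot verify them there. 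Your fallback through graph products of finite groups only covers $H_v=1$. It also cannot be combined with the $H_v$, because RFRS and special are not closed under extensions.

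The missing idea is to avoid torsion by using $\bZ$ instead of a finite cyclic group. Both $G_v\times\bZ$ and $H_v\times\frac{1}{d_v}\bZ$ contain $H_v\times\bZ$ with index $d_v$. So $\Gamma(G_v\times\bZ)$ is strongly commensurable with $\Gamma(H_v\times\frac{1}{d_v}\bZ)\cong\Gamma(H_v\times\bZ)$, which has $P$ outright. Then $\Gamma G_v\leq\Gamma(G_v\times\bZ)$ is virtually $P$ because $P$ passes to subgroups (Lemma \ref{lem:vir-inh-prop}).

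For CAT(0) and CAT(0) cube no kernel is needed at all. $H_v\times\bZ/d_v\bZ$ acts geometrically on $X_v$ through the projection, so Genevois's theorem applies to $\Gamma(H_v\times\bZ/d_v\bZ)$ directly. Virtually compact special needs the extra argument of Theorem \ref{thm:graph-product}(3): Genevois's virtually special action, virtual torsion-freeness from virtual specialness, then Haglund--Wise. Polyhedral products of compact special complexes do not give this once finite factors are present.

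Finally, (7) is not a virtual statement, and retractions onto vertex groups do not suffice. In $\langle a\rangle\ast\langle b\rangle$ the commutator $[a,b]$ is killed by both retractions. As in Lemma \ref{lem:swl}, hyperbolic elements of each amalgam $G_{i-1}\ast_{C_i}B_i$ must be handled by their translation length (at least $1$) on the Bass--Serre tree. This gives $\tau_S(g)\geq 1/\max_{s\in S}d(sx,x)$, and the retractions are needed only for elliptic elements.
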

\begin{rem}

\begin{enumerate}

  \item  A sketch of proof of the fact that graph products of RFRS groups are RFRS  based on \cite[Theorem 1.3]{KobSuc2020} was pointed out in \cite[Remark 3.16]{FiHuLe24}. We  give a completely different proof in Theorem \ref{thm:grp-rfrs}.

  \item   Kim proved in \cite{Kim12} that graph products of finite or cyclic groups are virtually compact special. Later Ruan and Witzel \cite{RuanWitzel16} proved that graph products of finitely generated abelian groups are virtually compact special. These are special cases of our result in Case (4). 

  \item  Genevois has proved that graph products of CAT(0) (resp. CAT(0) cube) groups are CAT(0) \cite[Theorem 8.20]{genevois2017cubical} (resp. CAT(0) cube \cite[Theorem 8.17]{genevois2017cubical}). He also has results for groups acting geometrically and specially on CAT(0) cube complexes \cite[Theorem 8.17]{genevois2017cubical}, but this does not seem to work directly for virtually (compact) special groups.

  \item  Leary and Petrosyan recently  proved that graph products of poly-free groups are poly-free \cite[Corollary 1.5]{LeaPet26}. Their proof also implies that graph products of virtually poly-free groups are virtually  poly-free. We shall give a shorter proof of these in \S \ref{section:poly-free}.  

  \item The distinction between virtually special groups and virtually RFRS groups is quite subtle. Essentially, the only currently known examples of groups of finite cohomological dimension that are  RFRS but not virtually special are certain cocompact complex hyperbolic  lattices; see the work of  Agol and Stover \cite{AgolStover23}. These groups cannot act properly on CAT(0) cube complexes \cite{Py13}, hence are not virtually special. For groups of infinite cohomological dimension, there are also examples such as the lamplighter groups $\bZ \wr \bZ$ that are RFRS.  Applying the graph product construction to these groups yields further examples of virtually RFRS groups that are not virtually special. Note also that RFRS groups have VRC, i.e. every cyclic subgroup is a virtual retract \cite[H.18]{AFW15}; see also \cite{Min21}.  Since Agol and Stover's examples are Gromov hyperbolic and RFRS but not virtually special, they are  counterexamples to \cite[Conjecture 13.2]{Wise14}.

    \end{enumerate}
\end{rem}

Let us recall the corresponding notions here briefly. The RFRS property was first introduced by Agol in \cite{Agol08}. A finitely generated group $G$ is called \emph{RFRS} if  there exists a sequence of finite index  subgroups
\[
G=G_0\rhd G_1\rhd \cdots
\]
such that
\[
\bigcap_i G_i=\{1\}
\]
and
\[
G_{i+1}\geq \ker(G_i\to H_1(G_i;\bQ))
\]
for every $i$. The RFRS property plays a fundamental role in $3$-manifold topology and geometric group theory, most notably in Agol's proof of the Virtual Fibering Theorem \cite{Agol13}.

A group is called \emph{CAT(0)} if it acts properly and cocompactly on a CAT(0) space by isometries. It is called \emph{CAT(0) cube} if it  acts properly and cocompactly on a CAT(0) cube complex. It is called \emph{compact special} if it is the fundamental group of a finite A-special cube complex in the sense of Haglund and Wise \cite{Hagwise08}. A group is \emph{special} if it embeds into a RAAG.

A group $G$ is \emph{poly-free} if there exists a finite sequence of subgroups
\[
1=G_0\triangleleft G_1\triangleleft \cdots \triangleleft G_n=G
\]
such that each quotient $G_{i+1}/G_i$ is free. If in addition each $G_i$ is normal in $G$, then $G$ is called \emph{normally poly-free}. (Normally) Poly-free groups appear naturally in the study of braid groups, more generally Artin groups and arrangement groups. In fact,  Bestvina in \cite[Question 2]{Bestvina99} and the discussions below it asks whether all Artin groups are virtually poly-free.



Let \(G\) be a finitely generated group with finite generating set \(S\).
For \(g \in G\), the \emph{word length} of \(g\) with respect to \(S\) is

\[
|g|_S
=
\min \left\{
n \in \mathbb{N}
:
g=s_1s_2\cdots s_n,\;
s_i \in S^{\pm 1}
\right\}.
\]

The \emph{stable word length} (or \emph{translation length}) of \(g\) is defined by

\[
\tau_S(g)
=
\lim_{n\to\infty}\frac{|g^n|_S}{n}.
\]

The limit exists because the sequence \(\{|g^n|_S\}_{n\ge1}\) is subadditive. If $\tau_S(g)>0$ for every infinite-order element $g$, we say the group $G$ has a \emph{positive stable word length}. If there is a uniform positive lower bound for $\{\tau_S(g):g\in G,\tau_S(g)>0\}$, we say the stable word length is \emph{discrete}. It is known that the property of having (discrete) positive stable word length is closed under taking finitely generated subgroups and direct products.  For groups with discrete positive stable word lengths  like mapping class groups, outer automorphism group of free groups, CAT(0) groups, hierarchically hyperbolic groups,  see \cite{Ye24, ABd} and the references therein.

All groups in this paper are assumed to be finitely generated and all graphs are finite unless stated otherwise.

\subsection*{Acknowledgments.}
Wu is currently a member of LMNS. We thank Federico Berlai, Anthony Genevois, Jingyin Huang, and Nansen Petrosyan for helpful communications or comments. We also thank Jingyin Huang for encouraging us to treat the virtually compact special case.

\section{Preliminaries}

Let $\Gamma$ be a finite simplicial graph. We denote the set of vertices and the set of edges by $V(\Gamma)$ and $E(\Gamma)$, respectively. Let $\{ G_v | v\in V(\Gamma)\}$ be a collection of groups. The graph product $\Gamma G_v$ is defined as the free product $\ast_{v\in V(\Gamma)} G_v$, subject to the following relations
$$[ g_v , g_w ]=1 \quad \text{for all } g_v \in G_v, g_w \in G_w \text{ such that } \{ v,w\}\in E(\Gamma). $$

For example, a graph product is a free product if its underlying graph $\Gamma$ has no edge, while it is a direct product if $\Gamma$ is complete.

The theory of graph products was first developed by E. Green in her thesis \cite{Green90}, where she proved 
\begin{thm}
    Every element $g\in \Gamma G_v$ can be represented by a reduced word $g_1 g_2 \cdots g_n (g_i\in G_{v_i})$ in the sense that it cannot be shortened by any finite sequence of the following moves
    \begin{itemize}
        \item Remove a syllable $g_i$ if $g_i=1$;
        \item Replace consecutive syllables $g_i$ and $g_{i+1}$ in the same vertex group $G_v$ with the single syllable $(g_ig_{i+1})$;
        \item For consecutive syllables $g_i \in G_v$ and $g_{i+1} \in G_w$ with $\{v,w\}\in E(\Gamma)$, interchange $g_i$ and $g_{i+1}$.
    \end{itemize}
    Moreover, a reduced word represents the trivial element if and only if it is the empty word.
\end{thm}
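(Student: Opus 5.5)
The existence of reduced words is immediate, and the real content is the normal form statement: a reduced word represents the identity only if it is empty. Given any word $g_1\cdots g_n$ representing $g$, each move either deletes a syllable or merges two syllables into one, except the swap, which preserves length. Among all words reachable from the given one by finitely many moves, choose one of minimal length. It cannot be shortened further, so it is reduced. For the second assertion I will use the standard van der Waerden-type argument: build an action of $\Gamma G_v$ on a set of normal forms and check that a nonempty reduced word moves the empty word.

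First I would fix what a reduced word is combinatorially. A word $g_1\cdots g_n$ with all $g_i\neq 1$ is reduced if and only if there are no $i<j$ with $v_i=v_j$ such that $v_k$ is adjacent to $v_i$ for every $i<k<j$. If such a pair exists, shuffling $g_j$ leftwards to $g_i$ and merging shortens the word. Conversely, if no such pair exists, the condition is invariant under swaps, and no merge or deletion is ever available. Next, let $W$ be the set of reduced words modulo the equivalence generated by swaps of adjacent commuting syllables. For $w\in W$ and $v\in V(\Gamma)$, call $w$ \emph{$v$-initial} if some representative begins with a syllable in $G_v$; by the characterization, this first syllable $a$ is uniquely determined.

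Define an action of each $G_v$ on $W$. For $h\in G_v\setminus\{1\}$:
\begin{itemize}
\item if $w$ is not $v$-initial, set $h\cdot w = h\,w$, which is reduced by the criterion;
\item if $w = a\,w'$ with $a\in G_v$ and $ha\neq 1$, set $h\cdot w = (ha)\,w'$;
\item if $ha=1$, set $h\cdot w = w'$.
\end{itemize}
One checks that this is well defined on swap classes and is a group action of $G_v$; this is exactly the free product computation. To descend to $\Gamma G_v$, it remains to verify that for adjacent $v,w$ the actions of $G_v$ and $G_{w}$ commute on $W$. Then the universal property of the presentation gives an action of the graph product.

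The main obstacle is this commutation check together with well-definedness modulo swaps. The commutation check needs a case analysis according to whether a word is $v$-initial and/or $w$-initial. When both hold, the $v$-syllable and the $w$-syllable can both be brought to the front and commute there, so the modifications are independent. The key lemma I would prove first is that $v$-initiality, and the leading $G_v$-syllable, are invariant under swaps and under left multiplication by $G_w$ for $w$ adjacent to $v$. Once the action exists, a reduced nonempty word $g_1\cdots g_n$ acts on the empty word by successively prepending syllables. By the criterion, each intermediate suffix is reduced and never cancels, so the result is the class of $g_1\cdots g_n$, which is nonempty. Hence $g\neq 1$.
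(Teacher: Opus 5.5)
Your proof is correct. It differs from the paper only in that the paper offers no proof of this statement: the theorem is quoted from Green's thesis \cite{Green90} and used as a black box, for instance to derive the splitting in Proposition~\ref{prop:amal-graph}. Your van der Waerden-style argument makes the result self-contained, and it does not use that amalgam decomposition, so there is no circularity with how the paper later uses Proposition~\ref{prop:amal-graph}. It also gives more than the stated theorem. Since $g\cdot\emptyset$ is the swap class of any reduced word representing $g$, two reduced words represent the same element if and only if they differ by swaps. This uniqueness up to swaps is the full strength of Green's normal form theorem. What the paper's citation buys is brevity; what your route buys is an independent foundation for everything that follows.

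The checks you defer all go through, and two observations make them mechanical. First, a swap never changes the relative order of two syllables whose vertices are equal or non-adjacent. It is this fact, rather than the reducedness criterion, that makes the leading $G_v$-syllable unique. It also shows that deleting the first $G_v$-syllable of a word is compatible with a single swap, so the tail $w'$ in $w=a\,w'$ is well defined up to swaps; this is the left cancellation your action needs. Second, a class is $v$-initial if and only if, in some (equivalently every) representative, all syllables preceding the first $G_v$-syllable lie in vertex groups adjacent to $v$. Consequently, a class that is both $v$-initial and $u$-initial, with $u$ adjacent to $v$, has a representative $a\,b\,x''$ with $a\in G_v$ and $b\in G_u$. (It helps to call the second vertex $u$, since you also use $w$ for elements of $W$.) With this, each case of the commutation check reduces to comparing two words that either coincide or differ by swapping their two leading syllables.
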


Given any full subgraph $\Lambda$ of $\Gamma$, we can form the \emph{full subgroup} $\Lambda G_v \leq \Gamma G_v$, which is also denoted by $G_{\Lambda}$. From the normal form theorem, it follows that $\Lambda G_v$ is exactly the subgroup of $\Gamma G_v$ generated by those $G_v$ such that $v\in \Lambda$. Given $v\in V$, the \emph{link} $lk(v)$ of $v$ is the full subgraph spanned by all vertices adjacent to $v$, and the \emph{star} $st(v)$ of $v$ is the full subgraph spanned by $v$ and $lk(v)$. And we use $\Gamma \setminus \{ v \}$ to denote the full subgraph of $\Gamma$ spanned by vertices in $V(\Gamma) \setminus \{v\}$.

Another useful consequence of the normal form theorem is the following.

\begin{prop}\label{prop:amal-graph}
    For any fixed vertex $v \in V(\Gamma)$, the graph product $G=\Gamma G_v$ splits as an amalgamated free product of full subgroups: $G= G_{st(v)}\ast_{G_{lk(v)}} G_{\Gamma \setminus \{ v \}}=(G_v \times G_{lk(v)})\ast_{G_{lk(v)}} G_{\Gamma \setminus \{ v \}}.$
\end{prop}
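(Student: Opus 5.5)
The plan is to compare presentations. Write $A = G_{st(v)}$, $C = G_{lk(v)}$, $B = G_{\Gamma\setminus\{v\}}$. Since $lk(v)$ and $\Gamma\setminus\{v\}$ are full subgraphs, the normal form theorem (as the paper notes) identifies $C$ and $B$ with the subgroups of $G$ generated by the corresponding vertex groups, and likewise for $A$. This removes any ambiguity about which copy of $C$ is meant in each factor.

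First I identify $A$ with $G_v\times C$. Every vertex of $lk(v)$ is adjacent to $v$, so the defining relations of $\Gamma G_v$ restricted to $st(v)$ are the relations of the graph product over $lk(v)$ together with $[g_v,g_w]=1$ for all $w\in lk(v)$. This is exactly a presentation of $G_v\times G_{lk(v)}$.

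Next I form the abstract amalgam $P = A\ast_C B$, where $C$ includes into $A$ as the second factor and into $B$ as the full subgroup. A presentation of $P$ has generators all the $G_u$ for $u\in V(\Gamma)$. The generators of $G_w$ for $w\in lk(v)$ are identified between the two sides by the amalgamation. The relations are:
\begin{itemize}
\item the relations of each $G_u$;
\item commutation $[g_u,g_w]=1$ for edges $\{u,w\}$ inside $\Gamma\setminus\{v\}$, coming from $B$;
\item commutation relations inside $st(v)$, coming from $A$.
\end{itemize}
Every edge of $\Gamma$ either avoids $v$, and so lies in $\Gamma\setminus\{v\}$, or joins $v$ to a vertex of $lk(v)$, and so lies in $st(v)$. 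Hence this relation set is exactly the defining relation set of $\Gamma G_v$. The only overlap is edges inside $lk(v)$, which are duplicated harmlessly. Thus the natural map $P\to G$, induced by the inclusions of $A$ and $B$, is an isomorphism.

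The only delicate step is justifying that the amalgam presentation is the union of the two presentations with the $C$-generators identified. This is the standard presentation of an amalgamated product, valid once we know the maps $C\to A$ and $C\to B$ are injective. Injectivity holds because they are the inclusions of full subgroups, which are injective by the normal form theorem. Equivalently, $C$ is a retract of both $A$ and $B$, via the map killing the other vertex groups.
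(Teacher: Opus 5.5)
Your argument is correct, but it takes a different route from the one the paper indicates. The paper gives no proof and presents the splitting simply as a consequence of Green's normal form theorem. You instead obtain the isomorphism $A\ast_C B\to G$ purely by comparing presentations: every edge of $\Gamma$ lies in $st(v)$ or in $\Gamma\setminus\{v\}$, and $st(v)\cap(\Gamma\setminus\{v\})=lk(v)$. You use the normal form theorem only to know that the abstract graph products over $st(v)$, $lk(v)$ and $\Gamma\setminus\{v\}$ embed in $G$ as the subgroups generated by their vertex groups.

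As you note yourself, even that input can be replaced by the retractions that kill the vertex groups outside a full subgraph; fullness is exactly what makes these retractions well defined. So your route is self-contained and more elementary. A normal-form proof would instead show injectivity of $A\ast_C B\to G$ by matching reduced forms in the amalgam with reduced words in $\Gamma G_v$. That gives extra information, namely compatibility of the two normal forms, but it needs the full theorem.

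One small point of precision: the union of the two presentations, with the generators of $C$ identified, presents the pushout whether or not $C\to A$ and $C\to B$ are injective. Injectivity is what makes this pushout an honest amalgamated free product in which $A$ and $B$ embed. That is the sense of ``splits'' here, and it is what the later Bass--Serre arguments in the paper use.
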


\begin{defn}
    Let $\cC$ be a class of groups. A  group $G$ is called residually $\cC$ if for any nontrivial element $g\in G$, there exists a group $C\in \cC$ and a map $\phi: G\to C$ such that $\phi(g)\neq 1$.
\end{defn}


We are interested in the inheritance properties of the graph products. The following theorem summarizes what we have found in the literature. We remind the reader that this is by no means a complete list. 

\begin{thm} \label{thm:graph-prod-sumry}
    The following classes of groups are closed under taking graph products:

    \begin{enumerate}
        \item Groups having solvable word problem (resp. conjugacy problem); 
        \item Residually finite groups;
        \item Residually amenable groups; residually elementary amenable groups;
        \item RFRS groups;
        \item RFR$p$ groups;
        \item Groups that are conjugacy separable;
        
        \item Groups that are hereditarily conjugacy separable;
        \item Groups that do not have pointwise-inner automorphisms;
        \item Groups that are cyclic subgroup separable;
        \item Groups having unique roots;
        \item Groups that are linear over the complex numbers;
        \item Locally indicable groups;
        \item Orderable groups;
        \item Poly-free groups;
        \item Sofic groups;
        \item Groups satisfying the Tits alternative, that is, groups in which every finitely generated subgroup either contains a free subgroup of rank $2$ or is virtually solvable;
        \item (Compact) special groups;
        \item CAT(0) groups;
        \item CAT(0) cube groups;
        \item Groups of type $F_n$ or $FP_n$;
        \item Virtually torsion-free groups;
        \item Virtually poly-free groups;
        \item Virtually orderable groups;
        \item Virtually locally indicable groups;
        \item Groups satisfying VRC;
        \item Groups satisfying the Farrell--Jones Conjecture with finite wreath products and coefficients in additive categories;
        \item Groups satisfying the Baum--Connes Conjecture with coefficients and finite wreath products.
    \end{enumerate}
\end{thm}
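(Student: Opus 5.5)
This theorem is a compilation of known results, so the proof will consist mainly of pointing each item to its source in the literature. Several items also admit a uniform inductive argument based on Proposition~\ref{prop:amal-graph}, which I would give wherever a single reference is not available. The common framework is induction on $|V(\Gamma)|$. For a fixed vertex $v$, Proposition~\ref{prop:amal-graph} writes
\[
\Gamma G_v=(G_v\times G_{lk(v)})\ast_{G_{lk(v)}}G_{\Gamma\setminus\{v\}}.
\]
Two structural facts about this splitting will be used repeatedly:
\begin{enumerate}
\item every full subgroup $G_\Lambda$ is a retract of $\Gamma G_v$, via the map killing the vertex groups outside $\Lambda$;
\item hence the amalgamated subgroup $G_{lk(v)}$ is a retract of both factors.
\end{enumerate}
By the inductive hypothesis, $G_{lk(v)}$ and $G_{\Gamma\setminus\{v\}}$ already have the property. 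Closure under direct products then handles the factor $G_v\times G_{lk(v)}$. What remains is to show that the property passes to an amalgam along a common retract.

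\textbf{Items settled directly by references.} These are the solvable word and conjugacy problems (Green's normal form together with the amalgam structure), residual finiteness (Green), conjugacy separability and hereditary conjugacy separability (Ferov), linearity (Hsu--Wise), sofic groups (Ciobanu--Holt--Rees), the Tits alternative (Antol\'in--Minasyan), special and compact special groups (embedding into a RAAG, Haglund--Wise), and CAT(0) and CAT(0) cube groups (Genevois, building the cube complex or polyhedral complex over $\Gamma$). Finiteness properties $F_n$ and $FP_n$ follow from the amalgam decomposition together with the usual Mayer--Vietoris or complex-of-spaces argument. The Farrell--Jones and Baum--Connes items follow from their known inheritance under amalgams and direct products.

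\textbf{Items handled by the retraction/residual argument.} For residual amenability, RFRS, RFR$p$, cyclic subgroup separability, local indicability and orderability, the induction step uses a standard fact: for $A\ast_C B$ with $C$ a retract of both $A$ and $B$, there is a retraction onto $A$ and one onto $B$. An element not conjugate into a factor is detected by the Bass--Serre action, mapping onto a free product of finite or residually-$\mathcal C$ quotients. Local indicability and orderability of amalgams along retracts are classical. For the virtual properties (virtually torsion-free, virtually poly-free, virtually orderable, virtually locally indicable, VRC), I would pass to finite-index subgroups $H_v\le G_v$. The kernel of $\Gamma G_v\to\prod G_v/H_v$ is a graph product of conjugates of the $H_v$ over a (possibly infinite, but locally finite in the relevant sense) graph. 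This reduces each virtual statement to the non-virtual one.

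\textbf{Main obstacle.} The hardest step is this last reduction. One must check that the kernel is itself a graph product with vertex groups conjugate to the $H_v$, which I would prove by the Bass--Serre argument on the tree of the splitting. One must then check that the non-virtual result holds for infinite graphs; for most items this follows by a direct limit argument, since each item is local.
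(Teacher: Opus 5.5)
\textbf{Main gap: the virtual items.} Your reduction for items (21)--(24) rests on a false claim. Even after replacing $H_v$ by its normal core, the kernel $K$ of $\Gamma G_v\to\prod_v G_v/H_v$ is in general \emph{not} a graph product of conjugates of the $H_v$, over any graph.

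\begin{itemize}
\item Take $\Gamma$ to be two non-adjacent vertices with vertex groups $\langle a\rangle\cong\langle b\rangle\cong\mathbb{Z}/2$ and $H_v=1$. Then $\Gamma G_v=D_\infty$ and $K=\langle (ab)^2\rangle\cong\mathbb{Z}$, whereas every graph product of trivial groups is trivial.
\item Allowing extra free factors does not rescue the claim. For the right-angled Coxeter group of a pentagon with $H_v=1$, $K$ is the fundamental group of a closed orientable surface of genus $5$. Such a group has no nontrivial graph product decomposition at all.
\end{itemize}

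Bass--Serre theory only presents $K$ as the fundamental group of a graph of groups. Working one vertex at a time, as in Proposition~\ref{prop:strong-comm-graph-sig}, does produce graph products. But each step duplicates the vertices outside the star instead of shrinking their groups, so the process never reaches $K$. Hence nothing reduces to the non-virtual statements.

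What works is Lemma~\ref{lem:poly-free-graph-prod}, which the paper uses alongside its citations of Januszkiewicz--\'Swi\k{a}tkowski and Leary--Petrosyan. It shows $\Gamma G_v$ is an iterated extension of $\prod_v G_v$ by free groups. So the preimage of $\prod_v H_v$ is an iterated extension of $\prod_v H_v$ by free groups, which settles any property closed under extensions and held by free groups. Corollary~\ref{thm:strong-com-gp} is an alternative route. VRC (25) is not of the form ``virtually $\mathcal{P}$''; the paper takes it from Minasyan.

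\textbf{Other gaps.} Where you simply cite (items (1), (2), (6), (7), (15), (16), (18), (19)), or give the Mayer--Vietoris induction for (20), you agree with the paper. The rest has further problems.

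\begin{itemize}
\item Items (8), (10) and (14) are never addressed, and your route to (22) presupposes (14).
\item For (26), inheritance of the Farrell--Jones conjecture under arbitrary amalgamated products is not an established result; the tree results in the literature need extra hypotheses such as acylindricity. The splittings of Proposition~\ref{prop:amal-graph} are far from acylindrical. For the graph product of copies of $\mathbb{Z}$ over a $4$-cycle, i.e.\ $F_2\times F_2$, the edge group $G_{lk(v)}$ is a direct factor, hence normal, hence fixes the whole Bass--Serre tree.
\item An amalgam-plus-direct-product argument also does not touch the ``finite wreath products'' clause in (26) and (27). The paper cites Gandini--R\"uping for (26), and reproves it via Lemma~\ref{lem:poly-free-graph-prod} and Br\"uck--Kielak--Wu. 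It cites Leary--Petrosyan for (27).
\item Your residual mechanism via free products of finite quotients cannot give RFRS, since RFRS groups are torsion-free. The paper relies on Koberda--Suciu's amalgamation theorem or on Theorem~\ref{thm:grp-rfrs}.
\item Linearity of graph products of arbitrary complex linear groups is due to Berlai--de la Nuez Gonz\'alez, not Hsu--Wise.
\item For compact special groups, an embedding into a RAAG is not enough. One takes the polyhedral product of the compact special cube complexes.
\end{itemize}
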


\begin{proof}  
\begin{enumerate}

    \item These are proved  in \cite{Green90}.
       
    \item This is first proved in \cite[Theorem 5.4]{Green90}; see also \cite[Theorem 3.7]{HsuWise1999}.
    
    \item \cite[Corollary A]{BerlaiFerov16}.
        \item A sketch of proof was given in \cite[Remark 3.16]{FiHuLe24}. See also Theorem \ref{thm:grp-rfrs} for a different proof.

        \item This follows from \cite[Theorem 1.3]{KobSuc2020}, using the amalgamated product in Proposition  \ref{prop:amal-graph}.

        \item \cite[Theorem 1.1]{Ferov16}.

        \item \cite[Theorem 1.2]{Ferov16}.

        \item \cite[Theorem 1.1]{Ferov16b}.

        \item \cite[Theorem A]{Ferov19}.

        \item \cite[Theorem 6.5]{Ferov19}
        
        \item \cite[Theorem A]{BdlN22}.
        \item \cite[Corollary 5.12]{AntolinMinasyan15}; see also Corollary \ref{cor:poly-free} for a shorter proof. 
        \item \cite[Corollary 5.13]{AntolinMinasyan15}; see also Corollary \ref{cor:poly-free} for a shorter proof. 
        \item \cite[Corollary 1.5]{LeaPet26}; see also Corollary \ref{cor:poly-free} for a shorter proof. 
        \item \cite{CioHolRee14}.
        \item \cite{AntolinMinasyan15}.

          \item This follows immediately by taking polyhedral products of the corresponding (compact) special complexes according to the graph.

        \item \cite[Theorem 8.20]{genevois2017cubical}.
        \item \cite[Theorem 8.18]{genevois2017cubical}.
        \item  \cite[Theorem 2]{Alonso96}, \cite{Cohen95}.
     
     \item  \cite[Corollary 3.4]{JanSwi01}. It also follows from \cite[Corollary 1.5]{LeaPet26}.
     
         \item This  follows from \cite[Corollary 1.5]{LeaPet26}.
         

            \item This follows from \cite[Corollary 1.5]{LeaPet26}. In fact, the graph product $\Gamma G_v$ surjects to $\prod G_v$ with kernel a subgroup of a RAAG. The statement now follows from the facts: RAAGs are orderable; orderable groups are closed under extensions.
            
        \item This also follows from  \cite[Corollary 1.5]{LeaPet26}; see Corollary \ref{cor:graph-prod-pass-sub} for a shorter proof.
        
         \item \cite[Theorem 7.1]{Min21}.
         
         \item See \cite{GanRup13}; See Corollary \ref{cor:poly-free} for a shorter proof.
         \item \cite[Corollary 1.8]{LeaPet26}.
\end{enumerate}
\end{proof}

\section{Strong commensurability theorem for graph products}

In this section, we give an elementary proof of Januszkiewicz and   \'Swi\k{a}tkowski's strong commensurability theorem for graph products. 

Recall that a subgroup $H\leq G$ is called a \emph{retract} if there is a homomorphism $r:G\to H$ such that $r|_H=\mathrm{id}_H$. Theorem \ref{thm:amal-comm} is a special case of the following technical but elementary lemma (with $A_1=D_1 \times C, A_2=D_2 \times C$). 

\begin{prop}\label{lem:common-finite-index-subgroup}

Let $G_i = A_i \ast_C B,i=1,2$ be two amalgamated products of groups. Suppose  that
\begin{enumerate}
    \item $C$ is a retract of $B$, and  $C\lhd A_i$ for both $i$; 
    \item $A_1$ and $A_2$ have a common subgroup $H\geq C$ of the same index $n$;
    \item there are representatives $\{ g_{i1} , g_{i2}, \cdots , g_{in} \}$ of right cosets for $H \backslash A_i, i=1,2,$ such that they lie in the centralizer of $C$, i.e., $g_{ij}cg_{ij}^{-1}=c$ for any $c\in C$, $i=1,2$, and $j=1,2,...,n$.
\end{enumerate}
Then $G_1,G_2$ have isomorphic subgroups of index \(n\). In fact, let $r:B\to C$ be the retraction map and $\pi_i:G_i\to A_i$ be the homomorphism induced by
\[
\pi_i|_{A_i}=\mathrm{id}_{A_i}, \qquad \pi_i|_B=r,
\]
and denote $K_i=\pi_i^{-1}(H)$. Then 
$K_1\cong K_2$.
\end{prop}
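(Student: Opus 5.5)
Since $\pi_i$ is surjective, $K_i=\pi_i^{-1}(H)$ has index $n$ in $G_i$, and $\ker\pi_i\le K_i$ because $H\ge C\ge\{1\}$. It remains to identify $K_i$ explicitly via Bass--Serre theory and observe that the description is symmetric in $i$.

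\emph{Step 1: the splitting of $K_i$.} Let $T_i$ be the Bass--Serre tree of $G_i=A_i\ast_C B$, and compute the graph of groups $K_i\backslash T_i$. Vertices of type $A$ correspond to double cosets $K_i\backslash G_i/A_i$. Since $\pi_i$ maps $A_i$ onto itself and $K_i=\pi_i^{-1}(H)$, we have $K_iA_i=\pi_i^{-1}(HA_i)=G_i$. Hence there is a single $A$-vertex, with stabilizer $K_i\cap A_i=H$. Vertices of type $B$ correspond to $K_i\backslash G_i/B$, which is in bijection with $H\backslash A_i/\pi_i(B)=H\backslash A_i/C=H\backslash A_i$, because $C\le H$ and $C\lhd A_i$ give $HgC=Hg$. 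Taking the representatives $g_{ij}$ for these cosets, the $B$-vertices are $g_{ij}B$, with stabilizers $K_i\cap g_{ij}Bg_{ij}^{-1}$. An element $g_{ij}bg_{ij}^{-1}$ lies in $K_i$ iff $g_{ij}r(b)g_{ij}^{-1}\in H$. By hypothesis (3) this equals $r(b)\in C\le H$, so the condition always holds. Thus every $B$-vertex has stabilizer the full conjugate $g_{ij}Bg_{ij}^{-1}$. Edges correspond to $K_i\backslash G_i/C\cong H\backslash A_i$, giving one edge per $j$, joining the $A$-vertex to the $j$-th $B$-vertex, with edge group $g_{ij}Cg_{ij}^{-1}=C$. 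The quotient graph is therefore a star, a tree, and
\[
K_i\cong H\ast_C\,{}^{g_{i1}}B\ast_C\cdots\ast_C{}^{g_{in}}B,
\]
a tree of groups with $n$ copies of $B$ all amalgamated to $H$ along $C$.

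\emph{Step 2: the embeddings.} The delicate point is to check how the edge group $C$ embeds on each side. In the $B$-vertex group $g_{ij}Bg_{ij}^{-1}$, the edge group is $g_{ij}Cg_{ij}^{-1}$. Conjugation by $g_{ij}$ restricted to $B$ is an isomorphism $B\to g_{ij}Bg_{ij}^{-1}$ that fixes $C$ pointwise, by (3). In the $A$-vertex group, the edge group $C$ sits inside $H$ by the given inclusion. The edge-group identifications are exactly the identity on $C$ and do not depend on $i$: no twisting by $g_{ij}$ occurs. This is where (3) is used essentially, and it is the main thing to verify carefully. I would do so by writing out the edge $g_{ij}e$ from the $A$-vertex, or rather from the translate $g_{ij}A_i=HA$-coset vertex, to $g_{ij}B$, and tracking the attaching maps in the standard quotient-graph-of-groups construction.

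\emph{Step 3: the conclusion.} Both $K_1$ and $K_2$ are then the fundamental group of the same graph of groups: $H$ in the center, $n$ copies of $B$, each amalgamated along $C$ via identical inclusions. Hence $K_1\cong K_2$. Alternatively, one could define explicit homomorphisms $K_1\to K_2$ sending $H\to H$ by the identity and $g_{1j}bg_{1j}^{-1}\mapsto g_{2j}bg_{2j}^{-1}$, and check consistency on $C$ using (3).
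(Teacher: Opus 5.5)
Your proposal is correct and follows essentially the same route as the paper. Both arguments let $K_i$ act on the Bass--Serre tree of $A_i\ast_C B$, find a star-shaped quotient with center $H$ and $n$ leaves $g_{ij}Bg_{ij}^{-1}$ glued along $g_{ij}Cg_{ij}^{-1}=C$, and then use hypothesis (3) to see that the edge maps are the canonical inclusions $H\hookleftarrow C\hookrightarrow B$, independent of $i$. The only difference is immaterial: you use (3) rather than the normality $C\lhd A_i$ to show that the $B$-vertex stabilizers are the full conjugates $g_{ij}Bg_{ij}^{-1}$.
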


\begin{proof}
The proposition follows from the following claim:
 
\textbf{Claim:} Both $K_1$ and $K_2$ are naturally isomorphic to a star-shaped graph of groups with one central vertex group $H$, $n$ outer vertex groups all equal to $B$, and all edge groups equal to $C$, where each edge group embeds into $H$ and $B$ by canonical inclusions. The diagram is drawn below. 


\[
\begin{tikzpicture}[baseline=(current bounding box.center)]
\node (A) at (0,0) {$\bullet$};
\node[left=0.2cm of A] {$H$};

\node (B1) at (4,2) {$\bullet$};
\node[right=0.2cm of B1] {$B$};

\node (Bmid) at (4,0) {$\bullet$};
\node[right=0.2cm of Bmid] {$B$};

\node[below=0.3cm of Bmid] {$\vdots$};

\node (Bn) at (4,-2) {$\bullet$};

\node[right=0.2cm of Bn] {$B$};

\draw (0,0) -- node[above left] {$C$} (4,2);
\draw (0,0) -- node[above] {$C$} (4,0);
\draw (0,0) -- node[below left] {$C$} (4,-2);
\end{tikzpicture}\label{diag:tree}
\]

We now prove the Claim using  Bass--Serre theory. Let $T_i$ be the Bass--Serre tree of the splitting $G_i=A_i\ast_C B$.
Thus \(G_i\backslash T_i\) is a single edge, with vertex stabilizers \(A_i\) and \(B\), and edge stabilizer \(C\).

We determine the structure of $K_i$ by considering the induced action of \(K_i\) on \(T_i\). 

First, we determine the vertices of \(K_i\backslash T_i\) of \(A_i\)-type. 
Since $A_i = \bigsqcup_{1\leq j \leq n} Hg_{ij}$, we have
$$
G_i= \pi_i^{-1} (\bigsqcup _j Hg_{ij})= \bigsqcup _j K_ig_{ij}=K_iA_i,
$$
which implies that there is one single $K_i$-orbit of $A_i$-type vertices. The stabilizer of the vertex corresponding to $A_i$ is $K_i\cap A_i=H$.

Next, we determine the vertices of $B$-type. Every $K_i$-orbit of $B$-type vertices of $T_i$ is represented by $aB$ for some $a\in A_i$, because $G_i=K_iA_i$. Two such vertices $aB$ and $a'B$ are in the same $K_i$-orbit if and only if
$$
a'\in K_i aB.
$$
Applying \(\pi_i\), and using \(\pi_i(K_i)=H\) and \(\pi_i(B)=C\), this is
equivalent to
$$
a'\in HaC .
$$
Since $C$ is normal in $A_i$ (implying $aC=Ca$) and \(C\leq H\), this is equivalent to
\[
a'\in Ha.
\]
Therefore the \(K_i\)-orbits of \(B\)-type vertices are indexed by $H \backslash A_i=\{ Hg_{i1} , Hg_{i2}, \cdots , Hg_{in} \}$. Then the quotient graph has $n$ $B$-type vertices, say $g_{i1}B,g_{i2}B,\cdots, g_{in}B$.

Let  $1\leq j\leq n$, since $g_{ij}Bg_{ij}^{-1}\leq \pi_i^{-1} (g_{ij}Cg_{ij}^{-1})=\pi_i^{-1} (C) \leq \pi_i^{-1}(H)=K_i$, the stabilizer of $g_{ij}B$ is 
$$
K_i\cap g_{ij}Bg_{ij}^{-1}=g_{ij}Bg_{ij}^{-1}.
$$ 

Similarly, one shows there are $n$ edges, $g_{i1}C,g_{i2}C,\cdots g_{in}C$, and the edge $g_{ij}C$ joins the vertex $A_i$ to the vertex 
 $g_{ij}B$  with stabilizer $g_{ij}Cg_{ij}^{-1}$.

So far we have proved that $K_i$ is the following graph of groups:

\[
\begin{tikzpicture}[baseline=(current bounding box.center)]
\node (A) at (0,0) {$\bullet$};
\node[left=0.2cm of A] {$H$};

\node (B1) at (4,2) {$\bullet$};
\node[right=0.2cm of B1] {$g_{i1}Bg_{i1}^{-1}$};

\node (Bmid) at (4,0) {$\bullet$};
\node[right=0.2cm of Bmid] {$g_{i2}Bg_{i2}^{-1}$};

\node[below=0.3cm of Bmid] {$\vdots$};

\node (Bn) at (4,-2) {$\bullet$};

\node[right=0.2cm of Bn] {$g_{in}Bg_{in}^{-1}$};

\draw (0,0) -- node[above left] {$g_{i1}Cg_{i1}^{-1}$} (4,2);
\draw (0,0) -- node[above] {$g_{i2}Cg_{i2}^{-1}$} (4,0);
\draw (0,0) -- node[below left] {$g_{in}Cg_{in}^{-1}$} (4,-2);
\end{tikzpicture}
\]
where the homomorphisms from $g_{ij}Cg_{ij}^{-1}$ to $H$ map $g_{ij}cg_{ij}^{-1}\in g_{ij}Cg_{ij}^{-1}$ to $g_{ij}cg_{ij}^{-1}$, and the homomorphisms from $g_{ij}Cg_{ij}^{-1}$ to $g_{ij}Bg_{ij}^{-1}$ are inclusions. By condition (3), all edge maps are naturally isomorphic to canonical inclusions $H\hookleftarrow C \hookrightarrow B$, which form exactly the claimed graph of groups.
\end{proof}

\begin{rem}
   Proposition \ref{lem:common-finite-index-subgroup} may fail without assumption (3).  Here is an example where the weakened hypotheses hold, but
$G_1$ and $G_2$ are not commensurable. Let
\(
C=\langle x,y \mid [x,y]=1\rangle \cong \mathbb Z^2,
H=C.
\)
Choose
\( \alpha=
\begin{pmatrix}
2 & 1\\
1 & 1
\end{pmatrix} \in \mathrm{GL}_2(\mathbb Z).\)
Let  $N\cong \mathbb Z^2$, and define
\(
B=N\rtimes C
\)
where $x$ acts on $N$ by $\alpha$, while $y$ acts trivially.
Then the projection
\(
r:B=N\rtimes C\longrightarrow C
\)
is a retraction. Define
\(
A_1=C\times (\bZ/2\bZ)
\)
and
\(
A_2=C\rtimes (\bZ/2\bZ)=C\rtimes \langle s\mid s^2=1\rangle,
\)
where $s$ swaps the two generators:
\[
sxs^{-1}=y,
\qquad
sys^{-1}=x.
\]
Set
\(
G_i=A_i\ast_C B.
\)
We claim that $G_1$ and $G_2$ are not commensurable. Suppose there are isomorphic finite-index subgroups $K'<G_1,K<G_2$ with an isomorphism $\phi:K'\rightarrow K$. 
By the proof of Proposition \ref{lem:common-finite-index-subgroup}, the finite-index subgroup $K_2<G_2$ is the semi-direct product $(N_0*N_1)\rtimes C$, where $N_0,N_1\cong \mathbb Z^2$ and $x$ acts on $N_1$ by $\alpha$, on $N_0$ trivially, while $y$ acts on $N_0$ by $\alpha$, on $N_1$ trivially. Since the element $y$ is central in $G_1$, we have an infinite-order central element $\phi(y^n)\in K \cap K_2$ for some positive integer $n$. Let $$p:K_2=(N_0*N_1)\rtimes \mathbb{Z}^2 \rightarrow (\mathbb{Z}^2 \times \mathbb{Z}^2)\rtimes \mathbb{Z}^2$$ be the epimorphism induced by the abelianization of $N_0 \ast N_1$. Considering the action of $x,y$ and the fact that $p(K\cap K_2)$ contains some powers of $x,y$, we see that the central element $p(\phi(y^n))$ is trivial. This implies that $\phi(y^n)\in ker(N_0*N_1 \rightarrow \mathbb{Z}^2 \times \mathbb{Z}^2)$,  which is impossible as the element cannot centralize the intersection $K \cap (N_0*N_1)$,  a finite-index subgroup in $N_0*N_1$. 
\end{rem}

\begin{prop}\label{prop:strong-comm-graph-sig}
    Let $\Gamma G_v$ and $\Gamma G_v^\ast$ be two graph products on the same underlying graph $\Gamma$. 
    Suppose that $G_v = G_v^\ast$ for all vertices of $\Gamma$ except $v_0$, and that $G_{v_0}$ and $ G_{v_0}^\ast$ are strongly commensurable with a common subgroup $H_{v_0}$. 
    Then the graph products $\Gamma G_v$ and $\Gamma G_v^\ast$ are also strongly commensurable with a common subgroup $K= \pi^{-1}(H_{v_0}) \cong \pi^{\ast-1}(H_{v_0})$ of index $[G_{v_0}: H_{v_0}]$, where $\pi:\Gamma G_v \to G_{v_0}$ (resp. $\pi^\ast:\Gamma G^\ast_v \to G_{v_0}^\ast$) is the projection onto the $v_0$ factor.
\end{prop}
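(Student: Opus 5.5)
We apply Proposition \ref{lem:common-finite-index-subgroup} to the splitting of Proposition \ref{prop:amal-graph} at the vertex $v_0$. Concretely, write
\[
\Gamma G_v=(G_{v_0}\times G_{lk(v_0)})\ast_{G_{lk(v_0)}} G_{\Gamma\setminus\{v_0\}},\qquad
\Gamma G_v^\ast=(G^\ast_{v_0}\times G_{lk(v_0)})\ast_{G_{lk(v_0)}} G_{\Gamma\setminus\{v_0\}}.
\]
Since $G_v=G_v^\ast$ for $v\neq v_0$, the groups $C=G_{lk(v_0)}$ and $B=G_{\Gamma\setminus\{v_0\}}$ coincide for both products. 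We set $A_1=G_{v_0}\times C$, $A_2=G^\ast_{v_0}\times C$ and $H=H_{v_0}\times C$.

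The next step is to verify the hypotheses. For (1), $C$ is a retract of $B$: the retraction $r$ kills $G_w$ for every $w\notin lk(v_0)$ and is the identity on $G_w$ for $w\in lk(v_0)$. This respects the defining relations, since every commutator relation either maps to a relation of $G_{lk(v_0)}$ or becomes trivial. Also $C\lhd A_i$, because $C$ is a direct factor. For (2), $H$ has index $n=[G_{v_0}:H_{v_0}]=[G^\ast_{v_0}:H_{v_0}]$ in both $A_i$, by strong commensurability. For (3), we choose right coset representatives of $H_{v_0}$ in $G_{v_0}$ (respectively $G^\ast_{v_0}$). These also represent $H\backslash A_i$, and they lie in the $G_{v_0}$ factor, so they centralize $C$. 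Proposition \ref{lem:common-finite-index-subgroup} then gives $K_1\cong K_2$, both of index $n$, where $K_i=\pi_i^{-1}(H)$.

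It remains to identify $K_i$ with $\pi^{-1}(H_{v_0})$, where $\pi$ is the projection onto the $v_0$ factor. This projection is well defined: it is the identity on $G_{v_0}$ and kills all other vertex groups, which is compatible with the relations. The map $\pi_i\colon G\to A_1=G_{v_0}\times C$ is the identity on $A_1$ and equals $r$ on $B$. Composing $\pi_i$ with the projection $A_1\to G_{v_0}$ agrees with $\pi$ on generators, so the two maps coincide. Since $H=H_{v_0}\times C$ is the full preimage of $H_{v_0}$ under $A_1\to G_{v_0}$, we get $\pi_i^{-1}(H)=\pi^{-1}(H_{v_0})$; the same argument works for the starred product.

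The only mildly delicate point is this last bookkeeping step, namely checking that the retraction and the projections are well-defined homomorphisms on the graph products and that the preimages match. This is routine from the presentation, so we expect no real obstacle.
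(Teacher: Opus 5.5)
Your proposal is correct and follows the paper's proof. Both apply Proposition~\ref{lem:common-finite-index-subgroup} to the splitting of Proposition~\ref{prop:amal-graph} at $v_0$, with $C=G_{lk(v_0)}$, $B=G_{\Gamma\setminus\{v_0\}}$ and $H=H_{v_0}\times C$; you are in fact more explicit than the paper, which leaves implicit both hypothesis (3) (coset representatives chosen in the $G_{v_0}$ factor centralize $C$) and the identification $\pi_i^{-1}(H)=\pi^{-1}(H_{v_0})$.
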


\begin{proof}
By Proposition \ref{prop:amal-graph}, we have the following amalgamated product of the graph products:
\[
\Gamma G_v = (G_{v_0}\times G_{lk(v_0)}) \ast_{G_{{lk}(v_0)}}  \Gamma_0 G_{v} \]
\[\Gamma G_v^\ast = (G_{v_0}^\ast\times G^\ast_{lk(v_0)}) \ast_{G^\ast_{{lk}(v_0)}}  \Gamma_0 G^\ast_v
\]
By our assumption, $ \Gamma_0 G_{v}  = \Gamma_0 G^\ast_v, G_{{lk}(v_0)}= G^\ast_{{lk}(v_0)} $, and $G_{{lk}(v_0)}$ is a retract of $\Gamma_0 G_{v}$. Here $\Gamma_0$ is the full subgraph of $\Gamma$ generated by vertices of $\Gamma$ other than $v_0$. $G_{lk(v_0)}$ (resp. $G^\ast_{lk(v_0)}$) is clearly normal in $G_{v_0}\times G_{lk(v_0)}$ (resp. $G^\ast_{v_0}\times G^\ast_{lk(v_0)}$). The common index $[G_{v_0}: H_{v_0}]$ subgroup $H = H_{v_0} \times G_{lk(v_0)} = H_{v_0} \times G^\ast_{lk(v_0)}$ contains $ G_{lk(v_0)} =  G^\ast_{lk(v_0)}$. So the conditions of Proposition \ref{lem:common-finite-index-subgroup} are all satisfied; hence $\Gamma G_v$ and $\Gamma G_v^\ast$ have an index $[G_{v_0}:H_{v_0}]$ common subgroup $\pi^{-1} (H_{v_0}) \cong \pi^{\ast-1} (H_{v_0})$.
\end{proof} 

 With Proposition \ref{prop:strong-comm-graph-sig}, the strong commensurability theorem for graph products now follows from the following simple observation:
 
 \begin{lem}\label{lem:str-com-tri}
     Let $A$ and $B$ be strongly $m$-commensurable, $B$ and $C$ be strongly $n$-commensurable. Then $A$ and $C$ are strongly $d$-commensurable for some $d\leq mn$.
 \end{lem}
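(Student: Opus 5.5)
We need to prove that if $A,B$ are strongly $m$-commensurable and $B,C$ are strongly $n$-commensurable, then $A,C$ are strongly $d$-commensurable for some $d\le mn$.

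The plan is to intersect the two common subgroups inside $B$ and transport the intersection back to $A$ and $C$ via the given isomorphisms. Unwinding the definitions, there are subgroups $A_1\le A$ and $B_1\le B$ with $[A:A_1]=[B:B_1]=m$ and an isomorphism $\alpha:A_1\to B_1$. Likewise there are $B_2\le B$ and $C_2\le C$ with $[B:B_2]=[C:C_2]=n$ and an isomorphism $\gamma:C_2\to B_2$.

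Set $L=B_1\cap B_2\le B$. By the standard index inequality for intersections,
\[
d:=[B:L]\le [B:B_1][B:B_2]=mn .
\]
Now pull $L$ back along the two isomorphisms: put $A'=\alpha^{-1}(L)\le A_1$ and $C'=\gamma^{-1}(L)\le C_2$. Then $\gamma^{-1}\circ\alpha$ restricts to an isomorphism $A'\cong C'$.

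It remains to check that the indices agree. Since $\alpha$ is an isomorphism $A_1\to B_1$ carrying $A'$ onto $L$, we have $[A_1:A']=[B_1:L]$. Multiplicativity of index then gives
\[
[A:A']=[A:A_1][A_1:A']=m\,[B_1:L]=[B:B_1][B_1:L]=[B:L]=d .
\]
The same computation with $C_2,B_2,\gamma$ gives $[C:C']=n\,[B_2:L]=[B:L]=d$. Hence $A'$ and $C'$ are isomorphic subgroups of $A$ and $C$ of common index $d\le mn$, so $A$ and $C$ are strongly $d$-commensurable.

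The only point needing care is that the common index is the same on both sides. This works because both indices equal $[B:L]$, computed through $B_1$ on one side and through $B_2$ on the other. Everything else is formal.
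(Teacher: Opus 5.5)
Your proof is correct and takes essentially the same route as the paper: intersect the two common subgroups inside $B$, and observe that the indices in $A$ and in $C$ both equal $[B:L]$ by multiplicativity. The differences are cosmetic: you track the isomorphisms $\alpha,\gamma$ explicitly where the paper identifies the common subgroups with their images, and you quote $[B:B_1\cap B_2]\le [B:B_1][B:B_2]$ where the paper proves the equivalent bound $[H:H\cap K]\le n$ via the injection $H/(H\cap K)\to B/K$.
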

\begin{proof}
Let $H$ be the common index $m$ subgroup of $A$ and $B$, $K$ be the common index $n$ subgroup of $B$ and $C$. Then $H$ and $K$ are both subgroups of $B$. So $H\cap K$ as subgroups of $H$ and $K$ are  finite index subgroups of  $A,B,C$. Let us calculate the index.

\[[A:H\cap K] = [A:H][H:H\cap K] = m[H:H\cap K];\]
\[[C:H\cap K] = [C:K][K:H\cap K] = n[K:H\cap K].\]
For $B$, we have two ways to calculate the index:

\[[B:H\cap K] = [B:H][H:H\cap K] = m[H:H\cap K], \text{ and}\]
\[[B:H\cap K] = [B:K][K:H\cap K] = n[K:H\cap K].\]
This implies that $m[H:H\cap K]=n[K:H\cap K]$, so $A$ and $C$ are strongly $m[H:H\cap K]$-commensurable. It remains to show that $[H:H\cap K] \leq n$. Consider the map of cosets $H/(H\cap K) \to B/K$. One checks it is injective. Hence $[H:H\cap K] \leq n$.
\end{proof}

\begin{cor}\label{thm:strong-com-gp} \cite[Theorem 1]{JanSwi01}
   Let $\Gamma G_v$ and $\Gamma G_v^\ast$ be two graph products on the same underlying graph $\Gamma$. Suppose that for any vertex $v\in V(\Gamma)$, $G_v$ and $G_v^\ast$ are strongly commensurable with the common subgroup $H_v$. Then the graph products $\Gamma G_v$ and $\Gamma G_v^\ast$ are also strongly $d$-commensurable with $d\leq \prod_{v\in V(\Gamma)} [G_v: H_v]$.   
\end{cor}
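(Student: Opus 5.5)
The plan is to change the vertex groups one at a time, using Proposition \ref{prop:strong-comm-graph-sig} for each change and Lemma \ref{lem:str-com-tri} to combine the resulting commensurabilities.

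Enumerate the vertices $V(\Gamma)=\{v_1,\dots,v_k\}$. For $0\leq i\leq k$, let $P_i$ be the graph product over $\Gamma$ whose vertex group at $v_j$ is $G^\ast_{v_j}$ for $j\leq i$ and $G_{v_j}$ for $j>i$. Then $P_0=\Gamma G_v$ and $P_k=\Gamma G_v^\ast$.

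Consecutive graph products $P_{i-1}$ and $P_i$ differ only at the vertex $v_i$. There the groups $G_{v_i}$ and $G^\ast_{v_i}$ are strongly commensurable with common subgroup $H_{v_i}$ of index $n_i:=[G_{v_i}:H_{v_i}]=[G^\ast_{v_i}:H_{v_i}]$. Proposition \ref{prop:strong-comm-graph-sig} therefore applies with $v_0=v_i$, and shows that $P_{i-1}$ and $P_i$ are strongly $n_i$-commensurable.

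Next I would induct on $i$ to show that $P_0$ and $P_i$ are strongly $d_i$-commensurable with $d_i\leq \prod_{j\leq i} n_j$.
\begin{itemize}
\item The base case $i=1$ is the previous step.
\item For the inductive step, combine the strong $d_{i-1}$-commensurability of $P_0$ and $P_{i-1}$ with the strong $n_i$-commensurability of $P_{i-1}$ and $P_i$. Lemma \ref{lem:str-com-tri} (with $A=P_0$, $B=P_{i-1}$, $C=P_i$) then gives strong $d_i$-commensurability with $d_i\leq d_{i-1}n_i$.
\end{itemize}
Taking $i=k$ gives the bound $d\leq \prod_{v} [G_v:H_v]$.

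I do not expect a real obstacle; the only points to check are bookkeeping. First, Proposition \ref{prop:strong-comm-graph-sig} needs the other vertex groups to be literally equal, and this holds by construction of the $P_i$. Second, Lemma \ref{lem:str-com-tri} only asks that the common subgroups be isomorphic copies. Its proof intersects two subgroups inside the middle group $B$, so each commensurability should be read as a pair of embeddings of a common abstract group. The intersection $H\cap K$ taken inside $P_{i-1}$ then embeds into $P_0$ and into $P_i$ via those identifications. With that reading the induction goes through unchanged.
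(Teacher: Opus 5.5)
Your proposal is correct and follows the same route as the paper. You interpolate between $\Gamma G_v$ and $\Gamma G_v^\ast$ by swapping one vertex group at a time, apply Proposition \ref{prop:strong-comm-graph-sig} to each consecutive pair, and combine the results via Lemma \ref{lem:str-com-tri} by induction. Your remark that the common subgroups should be read as embeddings of an abstract group (Proposition \ref{prop:strong-comm-graph-sig} only gives $\pi^{-1}(H_{v_0})\cong\pi^{\ast-1}(H_{v_0})$) is a valid bookkeeping point that the paper leaves implicit, and it does not affect the argument.
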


\begin{rem}
    Strictly speaking, \cite[Theorem 1]{JanSwi01} is slightly stronger than this as the index of their common finite index subgroup is precisely  $\prod_{v\in V(\Gamma)} [G_v: H_v]$.
\end{rem}

\begin{proof}[Proof of Corollary \ref{thm:strong-com-gp}]
One can build a sequence of graph products of groups so that each consecutive one differs only at one vertex group. In fact list the vertices of $\Gamma$ by $v_1,\cdots, v_n$. Let the first graph product be $\Gamma G_v$. For the second graph product,  we replace the group $G_{v_1}$ by $G_{v_1}^\ast$. Inductively, at the last step, we get the graph product $\Gamma G_v^\ast$. By Proposition \ref{prop:strong-comm-graph-sig}, the $i$-th and $(i+1)$-th graph products in this sequence are strongly $[G_{v_i}:H_{v_i}]$-commensurable. By Lemma \ref{lem:str-com-tri} and induction, we have $\Gamma G_v$ and $\Gamma G_v^\ast$ are strongly $d$-commensurable for some $d\leq \prod_{i=1}^n [G_{v_i}:H_{v_i}]$.
\end{proof}

\begin{rem}
   In \cite[\S 4]{JanSwi01}, they actually identify the common finite index subgroup as the preimage of $\prod H_v$ under the natural projection $\Gamma G_v \to \prod_{v\in V(\Gamma)} G_v$.  Our method does not seem to be able to prove this. 
\end{rem}

\section{Graph products of RFRS groups}

In this section, we study graph products of RFRS groups.

\begin{defn}
   A finitely generated group $G$ is called \emph{RFRS} (residually finite rationally solvable) if it has a sequence of finite index  subgroups $G=G_{0}\rhd G_{1} \rhd G_{2} \rhd \cdots$
such that: 
\begin{enumerate}
    \item $\cap_i G_{i}=1$; 
    \item $G_{i+1}\geq \ker (G_{i}\rightarrow H_{1}(G_{i}; \mathbb{Q}))$ for each $i$.
\end{enumerate}

Such a tower of subgroups of $G$ is called an \emph{RFRS tower}. A \emph{partial RFRS tower} is a finite sequence of finite index  subgroups $G=G_{0}\rhd  G_{1}\rhd  \cdots \rhd  G_n$ that satisfies condition (2). 

\end{defn}

\begin{rem}\label{rem:defn-RFRS}

\begin{enumerate}
\item Condition (2) is the same as saying the quotient map $G_i \to G_{i}/G_{i+1}$ factors through a torsion-free abelian group.

\item One could further assume that each $G_i$ is normal in $G$, but as explained by Agol in \cite[Section 2]{Agol08}, this  gives the same class of groups.

\item Sometimes it might be hard to prove that a group $G$ is $\RFRS$ directly, but if we can find a partial RFRS tower $G=G_{0}\geq G_{1}\geq \cdots \geq G_n$ such that $G_n$ is $\RFRS$, then $G$ is $\RFRS$.
\end{enumerate}
    
\end{rem}

 A virtually special group
is virtually RFRS \cite[Theorem 2.2]{Agol08}.  We record the following criterion for proving $\RFRS$.

\begin{lem}
    Let $G$ be a group, $G=G_0 \rhd G_1\rhd G_2 \rhd \cdots \rhd G_n $ and $G=H_0 \rhd H_1 \rhd H_2 \rhd \cdots \rhd H_m $ be two partial RFRS towers. Then 
    \[G=G_0 \rhd G_1 \rhd \cdots \rhd G_n \rhd G_n\cap H_1 \rhd \cdots \rhd G_n\cap H_m \]
    is also a partial RFRS tower of $G$.
\end{lem}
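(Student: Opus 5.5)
We have to check two things: that every term of the new sequence has finite index in $G$ and is normal in its predecessor, and that condition (2) of the definition holds at each new step. By Remark \ref{rem:defn-RFRS}(1), condition (2) at a step $X \rhd Y$ says exactly that $X/Y$ is abelian and the map $X \to X/Y$ factors through a torsion-free abelian group. Equivalently, $Y$ contains the rational commutator subgroup $\ker(X\to H_1(X;\bQ))$. The first $n$ steps are just the original tower $G_0\rhd\cdots\rhd G_n$, so only the new steps $G_n\cap H_{j}\rhd G_n\cap H_{j+1}$, for $0\le j<m$, need checking. Here we set $G_n\cap H_0=G_n$.

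Finite index is clear, since each $G_n\cap H_j$ is an intersection of two finite index subgroups of $G$. For normality, $H_{j+1}\lhd H_j$ gives $G_n\cap H_{j+1}\lhd G_n\cap H_j$: an element of $G_n\cap H_j$ conjugates $H_{j+1}$ into itself and $G_n$ into itself.

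For condition (2), the key step is naturality of rational abelianization under the inclusion $\iota\colon G_n\cap H_j\hookrightarrow H_j$. This inclusion induces a map $H_1(G_n\cap H_j;\bQ)\to H_1(H_j;\bQ)$ fitting into a commutative square. Hence if $x\in G_n\cap H_j$ lies in $\ker(G_n\cap H_j\to H_1(G_n\cap H_j;\bQ))$, its image in $H_1(H_j;\bQ)$ is also zero. So
\[
\ker\bigl(G_n\cap H_j\to H_1(G_n\cap H_j;\bQ)\bigr)\subseteq \ker\bigl(H_j\to H_1(H_j;\bQ)\bigr)\cap G_n\subseteq H_{j+1}\cap G_n.
\]
The last inclusion uses the hypothesis that $H_0\rhd\cdots\rhd H_m$ is a partial RFRS tower. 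This is exactly condition (2) for the step $G_n\cap H_j\rhd G_n\cap H_{j+1}$.

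The same argument can be phrased with the quotient description. The group $(G_n\cap H_j)/(G_n\cap H_{j+1})$ injects into $H_j/H_{j+1}$, and a subgroup of a group admitting an injection into a torsion-free abelian group through which $H_j$ factors inherits the same property. I do not expect a genuine obstacle here. The only point needing care is the functoriality of $H_1(-;\bQ)$ for the inclusion, and the observation that the intersection with $G_n$ preserves both normality and the kernel containment.
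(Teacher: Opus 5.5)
Your proof is correct and is essentially the paper's argument: naturality of $H_1(-;\bQ)$ under $G_n\cap H_j\hookrightarrow H_j$ gives $\ker\bigl(G_n\cap H_j\to H_1(G_n\cap H_j;\bQ)\bigr)\leq \ker\bigl(H_j\to H_1(H_j;\bQ)\bigr)\cap G_n\leq G_n\cap H_{j+1}$, and you also check finite index and normality, which the paper leaves implicit. One small slip is in the aside in your last paragraph: $H_j/H_{j+1}$ is finite, so it does not inject into a torsion-free abelian group but is a quotient of one; your main argument does not depend on that remark.
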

\begin{proof}
    The only point to check is the RFRS condition (2) along the new part of the tower. 
    
    Set $K_i:=G_n\cap H_i$ for $0\leq i\leq m$, then we have the following commutative diagram.
    $$
    \begin{array}{ccc}
        K_i & \hookrightarrow  & H_i \\
        \downarrow & & \downarrow \\
        H_1(K_i; \mathbb{Q}) & \longrightarrow & H_1(H_i; \mathbb{Q})
    \end{array}
    $$
    Since any $x\in K_i$ mapped trivially to $H_1(K_i; \mathbb{Q})$ cannot survive in $H_1(H_i; \mathbb{Q})$, we have 
    $$
    \ker (K_i \to H_1(K_i; \mathbb{Q})) \leq \ker (H_i \to H_1(H_i; \mathbb{Q})) \leq H_{i+1}.
    $$
    Also $K_i \leq G_n$, hence $\ker (K_i \to H_1(K_i; \mathbb{Q})) \leq G_n \cap H_{i+1} = K_{i+1}$.
\end{proof}

\begin{cor}\label{cor:def-RFRS}
    A group $G$ is RFRS if and only if for any non-trivial element $g\in G$, there is a partial RFRS tower $G=G_0\rhd G_1\rhd \cdots \rhd G_n$ such that $g\not \in  G_n$.
\end{cor}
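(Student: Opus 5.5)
The plan is to prove both directions separately. The forward direction is immediate from the definitions, and the reverse direction combines finitely many partial towers using the preceding lemma.

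\emph{Forward direction.} Suppose $G$ is RFRS with RFRS tower $G=G_0\rhd G_1\rhd\cdots$. Since $\bigcap_i G_i=1$, any non-trivial $g$ avoids some $G_n$. The truncation $G_0\rhd\cdots\rhd G_n$ is a partial RFRS tower with $g\notin G_n$.

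\emph{Reverse direction.} Since $G$ is finitely generated, it is countable, so enumerate its non-trivial elements as $g_1,g_2,\dots$. For each $k$ choose a partial RFRS tower $T^{(k)}$ whose last term $L_k$ does not contain $g_k$. Build a single infinite tower inductively by concatenation, exactly as in the preceding lemma.
\begin{itemize}
\item Start with $T^{(1)}$, whose last term is $M_1=L_1$.
\item Given a partial tower ending at $M_k$, apply the lemma to it and to $T^{(k+1)}$. This appends the terms $M_k\cap H^{(k+1)}_i$, where $H^{(k+1)}_i$ are the terms of $T^{(k+1)}$.
\item The new tower is again a partial RFRS tower. It ends at $M_{k+1}=M_k\cap L_{k+1}$, which does not contain $g_1,\dots,g_{k+1}$.
\end{itemize}
Each stage extends the previous one, so the union is an infinite descending sequence. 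Every initial segment of it satisfies condition (2), so the sequence satisfies condition (2) throughout. Every $g_k$ is excluded from $M_k$, so the intersection of the sequence is trivial, and $G$ is RFRS.

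\emph{Main obstacle.} The only delicate point is normality. Each new term must be normal in its predecessor, and every term must have finite index in $G$. The lemma as stated already provides the normality and the RFRS condition for the concatenated tower. Finite index holds because intersections of finitely many finite-index subgroups have finite index.

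If some step is stationary, that is, a term equals its predecessor, this does no harm: the kernel condition still holds trivially. If the definition is read as requiring strict descent, such repeated terms can simply be deleted.
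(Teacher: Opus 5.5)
Your proof is correct and matches the paper's intended argument: the paper states this corollary without a separate proof, as a direct consequence of the preceding concatenation lemma. That lemma is iterated along an enumeration of the countably many non-trivial elements, and the forward direction is truncation. Two small remarks: if $G$ is finite, the enumeration is finite, so you stop and pad with repeated terms, which your remark on stationary steps already covers. Also, normality is automatic, because any subgroup containing $\ker(G_i\to H_1(G_i;\mathbb{Q}))$ contains $[G_i,G_i]$.
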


    Let us denote the class of groups that are both poly-$\bZ$ and virtually abelian by $\mathcal{P}_{zva}$.

\begin{prop}\label{prop:rfrs-os}
\cite[Section 6]{OkuSch24}
\begin{enumerate}
    \item Residually RFRS groups are RFRS;
    \item A group is RFRS if and only if it is residually $\mathcal{P}_{zva}$.
\end{enumerate}    
\end{prop}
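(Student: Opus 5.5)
We prove (1) directly from Corollary \ref{cor:def-RFRS} by pulling back towers. For (2), the direction ``residually $\cP_{zva}$ $\Rightarrow$ RFRS'' reduces to (1) once we know $\cP_{zva}$-groups are RFRS. The direction ``RFRS $\Rightarrow$ residually $\cP_{zva}$'' comes from quotienting by suitable characteristic subgroups built from a normal tower. Throughout, groups are finitely generated. For a group $P$ we write $R(P)=\ker(P\to H_1(P;\bQ))$ for its rational kernel. This is the set of $x\in P$ whose image in $H_1(P;\bZ)$ is torsion, and it is characteristic in $P$.

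\emph{(1).} Let $g\neq 1$ and choose $\phi\colon G\to Q$ with $Q$ RFRS and $\phi(g)\neq 1$. Take an RFRS tower $(Q_i)$ of $Q$ and $n$ with $\phi(g)\notin Q_n$, and set $G_i=\phi^{-1}(Q_i)$. These are finite index and $G_{i+1}\lhd G_i$. The key check is $R(G_i)\leq G_{i+1}$. If $x\in R(G_i)$, then $x^k\in[G_i,G_i]$ for some $k\geq 1$. Hence $\phi(x)^k\in[Q_i,Q_i]$, so $\phi(x)\in R(Q_i)\leq Q_{i+1}$, i.e.\ $x\in G_{i+1}$. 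Since $g\notin G_n$, Corollary \ref{cor:def-RFRS} gives that $G$ is RFRS.

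\emph{(2), $\Leftarrow$.} By (1) it suffices to show that every $P\in\cP_{zva}$ is RFRS. Every finite-index subgroup of $P$ is again poly-$\bZ$, hence surjects onto $\bZ$ and has $b_1>0$. Fix a normal free abelian subgroup $A\lhd P$ of finite index. Because $A\cong\bZ^k$ is RFRS, Remark \ref{rem:defn-RFRS}(3) reduces the problem to finding a partial RFRS tower of $P$ ending inside $A$. I plan to induct on $|P_i/(P_i\cap A)|$. Given $P_i\not\leq A$, let $P_{i+1}$ be the preimage of $m\cdot(P_i/R(P_i))$, where $m$ is a multiple of $|P/A|$. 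The aim is to show that some element of $P_i\setminus A$ falls outside $P_{i+1}$, so that the finite image in $P/A$ strictly shrinks.

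\textbf{This is the step I expect to be the main obstacle.} One must rule out that $P_i\cap R(P_i)$ still surjects onto $P_i/(P_i\cap A)$. Poly-$\bZ$-ness is exactly what excludes Hantzsche--Wendt type behaviour here. I would try to exploit that a torsion-free element $t\in P_i\setminus A$ has $t^d\in A$, and to compare the $\bZ$-quotient of $P_i$ with the holonomy action on $A\otimes\bQ$.

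\emph{(2), $\Rightarrow$.} By Remark \ref{rem:defn-RFRS}(2), take an RFRS tower with every $G_i\lhd G$, and $n$ with $g\notin G_n$. Define the rational derived series $N_0=G$, $N_{j+1}=R(N_j)$. Each $N_j$ is characteristic, and inductively $N_j\leq G_j$. Moreover $G/N_n$ is poly-(finitely generated free abelian), hence poly-$\bZ$ and torsion-free, and $g\notin N_n$. Also $M=R(G_{n-1})$ is normal in $G$, contains $N_n$, and satisfies $g\notin M$, with $G/M$ virtually free abelian.

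The difficulty is to produce a \emph{single} quotient that is both poly-$\bZ$ and virtually abelian. My plan is to refine the tower by finitely many extra rational-kernel steps inside $G_{n-1}$, chosen so that the resulting normal subgroup $L$ satisfies $N_n\leq L\leq M$. The goal is that $G/L$ keeps the poly-$\bZ$ series induced from $G/N_n$, while inheriting the free abelian finite-index subgroup $G_{n-1}/M$ up to finite index. Checking torsion-freeness of $G/L$ is the delicate point, and I would handle it by passing to a finite-index characteristic refinement if needed.
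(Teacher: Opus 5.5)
Your part (1) is correct: pulling the tower back along $\phi$ preserves finite index, normality and $R(G_i)\le G_{i+1}$, and Corollary~\ref{cor:def-RFRS} then finishes. The paper gives no proof of this proposition and simply quotes Okun--Schreve, so I am judging your argument on its own. Part (2) has a genuine gap in each direction.

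\textbf{Direction $\Leftarrow$.} The step you flag is the whole content, and you leave it unproved. The only input you record, that finite-index subgroups of $P$ have $b_1>0$, cannot suffice. Take $P=W\times\bZ$, with $W$ the Hantzsche--Wendt group and $A$ its translation lattice times $\bZ$. Every finite-index subgroup of $P$ has $b_1>0$, yet $R(P)=W\times 1$ surjects onto $P/A$, and $P$ is not RFRS. What you need is that the \emph{infinite-index} subgroup $R(P_i)$ is poly-$\bZ$. Here is how that closes the step.
\begin{itemize}
\item Put $A_i=P_i\cap A$ and suppose $R(P_i)A_i=P_i$ while $P_i\not\le A$. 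Then $R(P_i)\neq 1$, so there is a nonzero homomorphism $\psi\colon R(P_i)\to\bQ$.
\item Each $n\in R(P_i)$ has a power lying in $A_i$, and that power is centralized by $A_i$. Hence $\psi$ is invariant under conjugation by $A_i$, and therefore by $P_i=R(P_i)A_i$.
\item Extend $\psi|_{R(P_i)\cap A_i}$ to some $\lambda\colon A_i\to\bQ$. Then $\Psi(na)=\psi(n)+\lambda(a)$ is a well-defined homomorphism $P_i\to\bQ$ that is nonzero on $R(P_i)$. This is absurd.
\end{itemize}
Finally, $P_i/(R(P_i)A_i)$ has exponent dividing $|P/A|$. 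So your $P_{i+1}$ lies in $R(P_i)A_i\subsetneq P_i$, the image in $P/A$ strictly shrinks, and your induction goes through.

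\textbf{Direction $\Rightarrow$.} Your claim that $G/N_n$ is poly-(finitely generated free abelian) is false. $N_1=R(G)$ usually has infinite index and is not finitely generated, so $N_1/N_2$ need not be finitely generated. For $G=F_2$, $N_1/N_2\cong H_1([F_2,F_2];\bZ)$ is free abelian of infinite rank, and $G/N_2$ is not polycyclic. So there is no poly-$\bZ$ series to inherit, and your $L$ is never actually specified.

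The remedy is to intersect with the finite-index normal tower rather than use the rational derived series of $G$. Set $L_0=G$ and $L_{i+1}=L_i\cap R(G_i)$. Then:
\begin{itemize}
\item $L_i\lhd G$ and $L_i\le G_i$.
\item $L_i/L_{i+1}$ embeds in the finitely generated free abelian group $G_i/R(G_i)$. Hence $G/L_n$ is poly-$\bZ$, and torsion-freeness is automatic.
\item $G/L_{i+1}$ embeds in $G/L_i\times G/R(G_i)$, and $G/R(G_i)$ is virtually abelian. By induction $G/L_n$ is virtually abelian.
\item $g\notin G_n\supseteq L_n$.
\end{itemize}
Incidentally $N_n\le L_n\le M$, so this is exactly the subgroup your plan was looking for.
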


\begin{lem}\label{lem:polzva-partial-RFRS}
    Let $G$ be a  group in $\mathcal{P}_{zva}$. Then there is a partial RFRS tower $G=G_{0}\geq G_{1}\geq \cdots\geq G_n$ such that $G_n$ is abelian. 
\end{lem}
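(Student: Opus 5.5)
We argue by induction on the Hirsch length $h(G)$ of $G\in\cP_{zva}$, that is, the number of infinite cyclic factors in a poly-$\bZ$ series. Since $G$ is poly-$\bZ$, it is torsion-free, and it has a finite index free abelian subgroup $A\cong\bZ^k$ with $k=h(G)$. We may replace $A$ by its normal core in $G$, so we may assume $A\lhd G$ and $Q=G/A$ is finite. We want to descend through subgroups $G_{i+1}$ containing $\ker(G_i\to H_1(G_i;\bQ))$ until we reach an abelian group.

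\textbf{Key step.} If $G$ is nontrivial and not abelian, we aim to show that $H_1(G;\bQ)\neq 0$ and that the map $G\to H_1(G;\bQ)$ does not have finite image on $A$ in a useless way. Since $G$ is poly-$\bZ$, it surjects onto $\bZ$: the top factor of its poly-$\bZ$ series is $\bZ$. Hence the torsion-free abelianization $\phi:G\to G^{ab}/\mathrm{tors}\cong\bZ^r$ has $r\geq 1$.

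Take $G_1=\phi^{-1}(m\bZ^r)$ for a suitable $m$. This is a finite index normal subgroup of $G$, and it contains $\ker\phi$, so it satisfies condition (2). Moreover $G_1$ is again in $\cP_{zva}$, since both poly-$\bZ$ and virtually abelian pass to subgroups. This alone does not reduce the Hirsch length, however, so we need a better invariant.

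We use the index $[G:A]$ of a maximal abelian normal subgroup of finite index, or more precisely the finite group $G_i/(A\cap G_i)$. Choose $m$ so that $\phi^{-1}(m\bZ^r)$ cuts down the image of $G$ in $Q$. The point is that $\phi(A)$ has finite index in $\phi(G)$, and $A\cap\ker\phi$ is exactly the part of $A$ killed rationally.

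Here is the cleaner route I would try first. Argue by induction on $|Q|$ where $Q=G/A$, with $A$ the Fitting subgroup or a maximal normal free abelian subgroup of finite index. Claim: if $Q\neq 1$, there is $g\in G\setminus A$ whose image under $\phi$ is not in $\phi(A)\otimes\bQ$... but this may fail for the non-abelian part.

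So the fallback is to use $H_1$ transfer. The vector space $H_1(G;\bQ)$ equals $(A\otimes\bQ)_Q$ plus a contribution from $Q$; the latter is zero since $Q$ is finite. Hence $\phi(G)$ is commensurable with $\phi(A)$. Now take $G_1=A\ker\phi$. This has finite index, contains $\ker\phi$, and the quotient $G/G_1$ is a quotient of $\phi(G)$, hence torsion-free abelian. We may take $m$ with $\phi^{-1}(\phi(A))$... but $G/A\ker\phi\cong\phi(G)/\phi(A)$ is finite, and it must also be torsion-free, which is a contradiction unless it is trivial. This suggests the quotients must instead be taken as $\phi^{-1}(m\bZ^r)$.

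\textbf{Main obstacle.} The crux is showing that the tower eventually reaches an abelian group, i.e., that some $\phi^{-1}(m\bZ^r)$ strictly shrinks the image in $Q$. I would use that $G$ is torsion-free: if every element $g\notin A$ had $\phi(g)\in\phi(A)\otimes\bQ$ at all levels, then since $g^{|Q|}\in A$, a nontrivial finite part of $Q$ would survive at every level. I expect Proposition~\ref{prop:rfrs-os} together with torsion-freeness to rule this out, via the alternative that poly-$\bZ$ virtually abelian groups are exactly the torsion-free crystallographic groups. I would induct on $h(G)$ using $\ker\phi\cap A$ and the fact that $\ker\phi$ has smaller Hirsch length, but this step is the one I am least sure about.
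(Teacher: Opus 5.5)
There is a genuine gap, located exactly at the step you flag as the main obstacle, and part of it comes from misreading condition (2).

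\textbf{The misreading.} A partial RFRS tower does not require $G/G_1$ to be torsion-free; a nontrivial finite group never is, and note that $G/\phi^{-1}(m\bZ^r)\cong(\bZ/m)^r$ is just as finite. Condition (2) only requires $G_1\geq\ker(G\to H_1(G;\bQ))=\ker\phi$, i.e.\ that $G\to G/G_1$ factor through the torsion-free group $\phi(G)$. So the candidate you rejected, $G_1=A\ker\phi=\phi^{-1}(\phi(A))$, is admissible. It is also optimal: every admissible $G_1$ has image in $Q=G/A$ containing the image of $\ker\phi$.

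\textbf{The missing step.} You never show that this image is a \emph{proper} subgroup of $Q$ when $Q\neq 1$, and torsion-freeness cannot give it. The Hantzsche--Wendt group is torsion-free and virtually $\bZ^3$ with holonomy $(\bZ/2)^2$, yet $H_1(G;\bQ)=0$, so no partial RFRS tower ever leaves $G$. This also shows that $\mathcal{P}_{zva}$ is not the class of torsion-free crystallographic groups. Likewise, inducting on Hirsch length via $\ker\phi$ produces towers inside $\ker\phi$, not inside $G$.

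\textbf{How to close it.} Apply the poly-$\bZ$ hypothesis to $N=\ker\phi$.
First, the five-term exact sequence for $1\to A\to G\to Q\to 1$ identifies $\phi|_A$ with the coinvariants map $A\to (A\otimes\bQ)_Q$, since $H_1(Q;\bQ)=H_2(Q;\bQ)=0$. Hence $M=N\cap A$ spans the subspace of $A\otimes\bQ$ generated by the vectors $qa-a$ ($q\in Q$, $a\in A$). This subspace contains no nonzero $Q$-invariant vector.
Now suppose $NA=G$. Then $N/M\cong Q$, and the same exact sequence gives $H_1(N;\bQ)\cong(M\otimes\bQ)_Q\cong(M\otimes\bQ)^Q=0$. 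But $N\neq 1$, since it maps onto $Q\neq 1$, and $N$ is poly-$\bZ$, so it surjects onto $\bZ$. This is a contradiction.
Therefore $G_1=A\ker\phi$ is normal in $G$, lies in $\mathcal{P}_{zva}$, contains $A$, and has $G_1/A\subsetneq Q$. Induction on $[G:A]$ then terminates at $G_n=A$.

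\textbf{Comparison with the paper.} Completed this way, your route is self-contained and in fact proves that $\mathcal{P}_{zva}$ groups are RFRS. The paper instead takes that fact as known (via Proposition~\ref{prop:rfrs-os}). It then uses Okun--Schreve's characterization by residual sequences $H_i\geq K_i$ with $H_i/K_{i+1}$ torsion-free abelian. Finiteness of Hirsch length forces $K_i=1$ for large $i$, so $H_{N-1}$ is abelian and $H_0\geq\cdots\geq H_{N-1}$ is the required tower.
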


\begin{proof}
    Recall that any group in the class  $\mathcal{P}_{zva}$ is RFRS. We want to make use of the following equivalent definition of RFRS in \cite[Section 6]{OkuSch24}:
    A group $G$ is RFRS  if there are two normal residual sequences of subgroups
\[
G = H_0 \geq H_1 \geq H_2 \geq \cdots
\quad \text{and} \quad
G = K_0 \geq K_1 \geq K_2 \geq \cdots
\]
such that:
\begin{enumerate}
    \item Each $H_i$ is a finite index subgroup of $G$;
    \item  $K_i \leq H_i$;
    \item For all $i \geq 0$, the quotient $H_i / K_{i+1}$ is torsion-free abelian.
\end{enumerate}

Note that since $K_i\leq H_i$, we also have $K_i/K_{i+1}$ is either trivial or torsion-free abelian. 
But since $G$ is poly-$\bZ$ in our case, and $\cap K_i =\{1\}$, we must have $K_i=\{1\}$ for all $i\geq N$ for some large $N$.  But in this case, the condition that $H_i/K_{i+1}$ is torsion-free abelian implies that $H_i$ is abelian for any $i\geq N-1$. It remains to show that $ G= H_0 \geq H_1 \geq H_2 \geq H_{N-1}$ is a partial RFRS tower. But this follows immediately from the fact that $H_i/K_{i+1}$ is torsion-free abelian and $K_{i+1}\leq H_{i+1}$.
\end{proof}


    

\begin{lem}\label{lem:polzva-rfrs}
    Graph products of  groups in $\mathcal{P}_{zva}$ are RFRS.
\end{lem}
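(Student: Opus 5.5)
Let $G=\Gamma G_v$ with each $G_v\in\cP_{zva}$. The plan is to use Lemma \ref{lem:polzva-partial-RFRS} at each vertex to pass to a finite index subgroup of $G$ that is itself a graph product with abelian vertex groups. Then we conclude with Remark \ref{rem:defn-RFRS}(3), since graph products of finitely generated free abelian groups are RFRS.

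First, for each vertex $v$, Lemma \ref{lem:polzva-partial-RFRS} gives a partial RFRS tower $G_v=G_{v,0}\rhd G_{v,1}\rhd\cdots\rhd G_{v,n_v}$ with $G_{v,n_v}$ abelian. Since $G_v$ is poly-$\bZ$, $G_{v,n_v}$ is torsion-free, hence $\cong \bZ^{k_v}$. Padding with trivial steps, we may assume all $n_v=N$.

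Next we lift these towers to $G$ using the projection $p:G\to \prod_v G_v$, which kills commutators of non-adjacent vertex groups. Put $L_i=p^{-1}(\prod_v G_{v,i})$. Then $L_{i+1}\lhd L_i$ has finite index, and $L_i/L_{i+1}\cong \prod_v G_{v,i}/G_{v,i+1}$. Each factor is a quotient of $G_{v,i}$ through a torsion-free abelian group, so the product is torsion-free abelian. Hence $L_i\to L_i/L_{i+1}$ factors through a torsion-free abelian group, and by Remark \ref{rem:defn-RFRS}(1) $G=L_0\rhd\cdots\rhd L_N$ is a partial RFRS tower. It remains to show that $L_N=p^{-1}(\prod_v \bZ^{k_v})$ is RFRS.

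For this I would iterate Proposition \ref{prop:strong-comm-graph-sig} vertex by vertex with $H_{v}=G_{v,N}$. At the first vertex $v_1$ it gives $\pi^{-1}(H_{v_1})\cong \pi^{*-1}(H_{v_1})$, where the latter is an index-one subgroup, namely the graph product in which $G_{v_1}$ is replaced by $H_{v_1}$. To iterate, one needs this isomorphism to identify the remaining vertex groups compatibly with the projections. This is the main obstacle, because the paper notes that the method does not identify $p^{-1}(\prod H_v)$ directly. I would handle it by checking, from the explicit graph of groups in the proof of Proposition \ref{lem:common-finite-index-subgroup}, that the isomorphism restricts to the identity on $\Gamma_0G_v$ and on $H_{v_0}\times G_{lk(v_0)}$. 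Then $\pi_w$ for $w\neq v_0$ corresponds under it, and the preimage of $H_w$ is transported.

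If that bookkeeping fails, the fallback is cleaner. Corollary \ref{thm:strong-com-gp} gives some finite index subgroup $K\le G$ isomorphic to a finite index subgroup $K'$ of $\Gamma\bZ^{k_v}$, which is a RAAG. Then $K'$ is RFRS by \cite[Theorem 2.2]{Agol08}, since RAAGs are special, and finite index subgroups of RFRS groups are RFRS. It then suffices to arrange that $K\le L_N$, e.g.\ by intersecting $K$ with $L_N$, which keeps it RFRS because $K\cap L_N$ has finite index in $K$. We also need a partial RFRS tower from $G$ down into $K\cap L_N$. So I would aim to ensure that $K$ is exactly $p^{-1}(\prod H_v)=L_N$, which is where the compatibility check above is essential. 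Granting $L_N\cong$ a graph product of the $\bZ^{k_v}$ (a RAAG), $L_N$ is RFRS, and Remark \ref{rem:defn-RFRS}(3) finishes the proof.
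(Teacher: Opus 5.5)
There is a genuine gap, and it is more than the bookkeeping issue you flag. Proposition \ref{prop:strong-comm-graph-sig} requires $G_{v_0}$ and $G_{v_0}^\ast$ to contain $H_{v_0}$ with the \emph{same} index. With $G^\ast_{v_1}=H_{v_1}$ the indices are $d_{v_1}$ and $1$, so the proposition does not apply, and the conclusion you draw from it is false. By the proof of Proposition \ref{lem:common-finite-index-subgroup}, $\pi^{-1}(H_{v_1})$ is a tree of groups with centre $H_{v_1}\times G_{lk(v_1)}$ and $d_{v_1}$ copies of $G_{\Gamma\setminus\{v_1\}}$, not one copy. For instance, take $\Gamma$ to be two vertices with no edge, $G_v=\bZ$ and $H_{v_1}=2\bZ$. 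Then $\pi^{-1}(2\bZ)$ has index $2$ in $F_2$, hence is $F_3$, whereas replacing $G_{v_1}$ by $H_{v_1}$ gives $F_2$. So in general $L_N=p^{-1}(\prod_v H_v)$ is \emph{not} the RAAG $\Gamma\bZ^{k_v}$, and ``granting'' this grants something false. The fallback does not rescue the argument. It presupposes that $G_v$ and $\bZ^{k_v}$ are strongly commensurable, which you do not argue. Even then it only shows that $G$ is virtually RFRS, which does not imply RFRS, and as you note it reduces to the same identification $K=L_N$. Your tower $G=L_0\rhd\cdots\rhd L_N$ itself is fine, with one slip: $\prod_v G_{v,i}/G_{v,i+1}$ is finite, and what is torsion-free abelian is the group through which $L_i\to L_i/L_{i+1}$ factors.

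The missing idea is a replacement vertex group that contains $H_v$ with index exactly $d_v$ and is still as good as $H_v$. The paper first embeds $\Gamma G_v$ in $\Gamma(G_v\times\bZ)$. It then replaces $G_{v_0}\times\bZ$ by $G_{v_0,1}\times\frac{1}{d}\bZ\cong G_{v_0,1}\times\bZ$, where $d=[G_{v_0}:G_{v_0,1}]$. Now the kernel of $\Gamma(G_v\times\bZ)\to G_{v_0}/G_{v_0,1}$ is isomorphic to a subgroup of a graph product of the same shape, and the paper inducts one tower step at a time. Each step only needs ``isomorphic to a subgroup of'', so no isomorphism ever has to be tracked.

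Your route can be repaired along the same lines. Since $G_v$ is torsion-free, $k_v\geq 1$ unless $G_v=1$, so you may take $G_v^\ast=\frac{1}{d_v}\bZ\times\bZ^{k_v-1}\supseteq H_v\cong\bZ^{k_v}$. The isomorphism $\phi$ of Proposition \ref{lem:common-finite-index-subgroup} is given by $h\mapsto h$ and $g_{1j}bg_{1j}^{-1}\mapsto g_{2j}bg_{2j}^{-1}$ with $g_{ij}\in G_{v_0}$. It does satisfy $p^\ast\circ\phi=p$, because $p(g_{ij})$ lies in the $v_0$-factor while $p(b)$ has trivial $v_0$-coordinate. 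Iterating this carries $L_N$ onto a finite-index subgroup of the RAAG $\Gamma G_v^\ast$, which is RFRS. This verification, done with a legitimate strongly commensurable replacement, is exactly what your proposal leaves undone.
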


\begin{proof}
    Let $\Gamma G_v$ be a graph product where each vertex group $G_v$ is in $\mathcal{P}_{zva}$.
    By Lemma \ref{lem:polzva-partial-RFRS}, each $G_v$ admits a partial RFRS tower of length $n_v$, that terminates in a torsion-free abelian group. Denote them by 
    $$
    G_v = G_{v,0} \rhd G_{v,1} \rhd \cdots \rhd G_{v,n_v}.
    $$

    Since $\Gamma G_v$ embeds into $\Gamma (G_v\times 
    \bZ)$ and subgroups of RFRS groups are RFRS, it suffices to show that $\Gamma (G_v\times \bZ)$ is RFRS. 
    

    
    We will reduce the RFRSness of $\Gamma (G_v\times \bZ)$ to that of a RAAG by induction. Since RAAGs are RFRS \cite[Theorem 2.2]{Agol08}, this will finish the proof. 
    Let us fix a vertex $v_0$ first. Consider the projection 
    $$\Gamma (G_v\times \bZ) \to G_{v_0}\times \bZ \to G_{v_0} \to G_{v_0}/G_{v_0,1}.$$
    Let $K_{v_0,1}$ be the kernel of the composition of the above maps. Since $G_{v_0}\rhd G_{v_0,1}$ is a partial RFRS tower,  one checks that $\Gamma (G_v\times \bZ) \rhd K_{v_0,1}$ is also a partial RFRS tower.  So it suffices to show $K_{v_0,1}$ is RFRS. Consider another graph product $\Gamma G'_{v} $, where $G_{v}' = G_v \times \bZ$ for all $v\neq v_0$, $G_{v_0}' = G_{v_0,1} \times \frac{1}{[G_{v_0}: G_{v_0,1}]}\bZ$. Applying Proposition \ref{prop:strong-comm-graph-sig} to the graph product $\Gamma (G_v\times \bZ)$ and $\Gamma G'_v$, we have $K_{v_0,1}$ is isomorphic to a subgroup of $\Gamma G_v'$. So it suffices to show that $\Gamma G_v'$ is RFRS.  Notice now  that in $\Gamma G_v'$, we have replaced the vertex group $G_{v_0} \times \bZ$ by $G_{v_0,1} \times \frac{1}{[G_{v_0}: G_{v_0,1}]} \bZ  \cong G_{v_0,1} \times \bZ$ and all other vertex groups stay the same. Inductively, using the partial RFRS tower of $G_{v_0}$, in order to prove $\Gamma (G_v\times \bZ)$ is RFRS, it suffices to prove that $\Gamma G_v^\ast $ is RFRS, where $G_v^\ast =G_{v} \times \mathbb{Z}$ for all $v\neq v_0$ and $G_{v_0}^\ast =G_{v_0,n_{v_0}} \times \mathbb{Z}$. Applying this strategy to   vertices of $\Gamma$ one by one, we see that to prove $\Gamma (G_v\times \bZ)$ is RFRS, it suffices to prove that $\Gamma (G_{v,n_v}\times \bZ)$ is RFRS. But each $G_{v,n_v}$ is now a finitely generated torsion-free abelian group, implying that  $\Gamma (G_{v,n_v}\times \bZ)$ is a RAAG. This finishes the proof.
\end{proof}

\begin{thm}\label{thm:grp-rfrs}
    Graph products of RFRS groups are RFRS.
\end{thm}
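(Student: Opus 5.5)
The plan is to reduce to Lemma \ref{lem:polzva-rfrs} via the residual characterization in Proposition \ref{prop:rfrs-os}. Let $\Gamma G_v$ be a graph product of RFRS groups, and fix a nontrivial $g\in\Gamma G_v$. By Proposition \ref{prop:rfrs-os}(1) it suffices to find a homomorphism from $\Gamma G_v$ to an RFRS group that does not kill $g$. Since $\Gamma G_v$ is finitely generated (as is each $G_v$), this will show that $\Gamma G_v$ is residually RFRS, hence RFRS.

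First, write $g$ as a reduced word $g=g_1g_2\cdots g_n$ with $g_i\in G_{v_i}$, so each $g_i\neq 1$. By Proposition \ref{prop:rfrs-os}(2), each $G_v$ is residually $\cP_{zva}$. For each vertex $v$, the finitely many syllables $g_i$ lying in $G_v$ are nontrivial, so we can choose a homomorphism $\phi_v:G_v\to Q_v$ with $Q_v\in\cP_{zva}$ and $\phi_v(g_i)\neq 1$ for all those $g_i$. To handle several elements at once, take the product of the separating maps, one for each syllable. This product lands in a finite direct product of $\cP_{zva}$ groups, which is again poly-$\bZ$ and virtually abelian. We then replace $Q_v$ by the image of $G_v$ in this product: a subgroup of a poly-$\bZ$, virtually abelian group is again poly-$\bZ$ and virtually abelian, so the image still lies in $\cP_{zva}$.

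The maps $\phi_v$ assemble into a homomorphism $\Phi:\Gamma G_v\to\Gamma Q_v$ over the same graph, since the defining commutation relations are preserved. The image $\Phi(g)=\phi_{v_1}(g_1)\cdots\phi_{v_n}(g_n)$ is a word whose syllables are all nontrivial and whose vertex sequence is the same as that of the original word. Reducedness of a word in a graph product depends only on the vertex sequence together with the nontriviality of the syllables: the shuffling and merging moves are determined by the vertex labels, and no syllable becomes trivial. So $\Phi(g)$ is again reduced and nonempty, and by Green's normal form theorem $\Phi(g)\neq 1$.

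Finally, $\Gamma Q_v$ is RFRS by Lemma \ref{lem:polzva-rfrs}, so $\Gamma G_v$ is residually RFRS and therefore RFRS by Proposition \ref{prop:rfrs-os}(1). The main point needing care is the reducedness claim in the previous step: a merge could in principle make a syllable trivial, but merges never occur in a reduced word, and the set of admissible moves depends only on the vertex labels. Given the earlier lemmas, the remaining steps are routine.
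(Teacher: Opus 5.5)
Your proposal is correct and follows essentially the same route as the paper: you separate the syllables of a reduced word by $\cP_{zva}$ quotients of the vertex groups, use Green's normal form theorem to see that the image in $\Gamma Q_v$ is nontrivial, and conclude via Lemma \ref{lem:polzva-rfrs} and Proposition \ref{prop:rfrs-os}. Your extra step of separating several syllables in the same vertex group at once, by passing to the image in a finite product of $\cP_{zva}$ groups, fills in a detail the paper leaves implicit.
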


\begin{proof}
    Let $\Gamma G_v$ be a graph product where each vertex group $G_v$ is RFRS.
    
    Suppose $1 \neq g \in \Gamma G_v$ has a normal form $g= g_1 g_2\cdots g_r$. Since each $G_v$ is residually $\mathcal{P}_{zva}$, we may choose a  quotient $Q_v\in \mathcal{P}_{zva}$ for each $G_v$ such that each $g_i$ maps to a non-trivial element in its respective quotient. Since all syllables remain nontrivial and the vertex sequence is the same, Green’s normal form theorem implies that the image word is still reduced.
    
    The induced homomorphism $f: \Gamma G_v \to \Gamma Q_v$ maps g to an element in $\Gamma Q_v$ with a non-trivial normal form, thus $f(g) \neq 1$. Since $\Gamma Q_v$ is RFRS by Lemma \ref{lem:polzva-rfrs}, it follows that $\Gamma G_v$ is residually RFRS. Proposition \ref{prop:rfrs-os} now completes the proof.
\end{proof}





\section{Graph products of poly-free and normally poly-free groups}\label{section:poly-free}

We study (normal) poly-freeness of graph products in this section.

\begin{lem}\cite[Lemma 2.2]{Wu2022}
    $G$ is normally poly-free if and only if there exists a sequence of quotients
    $$
    G=Q_0 \xrightarrow{q_1} Q_1 \xrightarrow{q_2} Q_2 \xrightarrow{q_3} \cdots \xrightarrow{q_n} Q_n = \{1\}
    $$
    such that $\ker q_i$ is free for each $i$. 
\end{lem}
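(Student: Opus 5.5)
The plan is to prove both directions directly, translating between the chain of normal subgroups and the chain of successive quotients.

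For the forward direction, suppose $G$ is normally poly-free with a series
\[
1=G_0\triangleleft G_1\triangleleft \cdots \triangleleft G_n=G,
\]
where each $G_i$ is normal in $G$ and each $G_{i+1}/G_i$ is free. Set $Q_i=G/G_{n-i}$, so that $Q_0=G$ and $Q_n=G/G=\{1\}$. Since $G_{n-i}\leq G_{n-i+1}$, the map $q_{i+1}\colon G/G_{n-i}\to G/G_{n-i-1}$ should be the natural projection. Here I need to fix the indexing carefully: the quotients are taken by larger and larger normal subgroups as $i$ grows. So I reindex by setting $Q_i=G/G_i$, where $Q_0=G/G_0=G$ and $Q_n=G/G_n=\{1\}$. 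Then $q_i\colon G/G_{i-1}\to G/G_i$ is the natural surjection, which exists because $G_{i-1}\leq G_i$ and both are normal in $G$. By the third isomorphism theorem, $\ker q_i = G_i/G_{i-1}$, which is free by hypothesis.

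For the converse, suppose we are given the chain of quotients $q_1,\dots,q_n$. Let $p_i=q_i\circ\cdots\circ q_1\colon G\to Q_i$, with $p_0=\mathrm{id}$, and put $G_i=\ker p_i$. Each $G_i$ is normal in $G$, we have $G_0=1$ and $G_n=G$ since $Q_n$ is trivial, and $G_{i-1}\leq G_i$. The map $p_{i-1}$ is surjective because each $q_j$ is, so it induces an isomorphism $G/G_{i-1}\cong Q_{i-1}$. Under this isomorphism $G_i/G_{i-1}$ corresponds to $p_{i-1}(G_i)=\ker q_i$. Hence $G_i/G_{i-1}$ is free, and $G$ is normally poly-free.

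There is no real obstacle here. The only points needing care are the indexing convention and the implicit assumption that the $q_i$ are surjective, which is what ``sequence of quotients'' means. Surjectivity is needed to identify $p_{i-1}(G_i)$ with $\ker q_i$: an element of $\ker q_i$ lifts to $G$, and that lift lies in $\ker p_i=G_i$.
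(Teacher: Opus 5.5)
Your argument is correct. The forward direction is the third isomorphism theorem applied to $Q_i=G/G_i$, and the converse rightly uses surjectivity of $p_{i-1}$ to get $G_i/G_{i-1}\cong p_{i-1}(G_i)=\ker q_i$.

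The paper gives no proof of its own; it simply cites \cite[Lemma 2.2]{Wu2022}, and your argument is the routine correspondence behind that citation. In a final write-up, delete the abandoned first attempt $Q_i=G/G_{n-i}$: with the paper's increasing indexing it gives $Q_0=\{1\}$ and $Q_n=G$, the wrong way round.
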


\begin{lem}\label{lem:graph-group-free}\cite[Lemma 2.6]{Wu2022}
    Let $C$ be a subgroup of $A$ and $B$ and $G = A \ast_C B$ be the amalgamated product. Suppose $f : G \to Q$ is a map such that $f|_C$ is injective. If $\ker(f|_A)$ and $\ker(f|_B)$ are free groups (resp. poly-free groups), then $\ker(f)$ is also a free group (resp. poly-free group).
\end{lem}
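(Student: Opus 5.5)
The statement is Lemma \ref{lem:graph-group-free}: for $G=A\ast_C B$ and $f\colon G\to Q$ with $f|_C$ injective, freeness (resp. poly-freeness) of $\ker(f|_A)$ and $\ker(f|_B)$ passes to $\ker f$. The plan is to let $N=\ker f$ act on the Bass--Serre tree $T$ of the splitting $G=A\ast_C B$ and read off a graph-of-groups decomposition of $N$ from the quotient $N\backslash T$.

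\textbf{Stabilizers.} Vertex stabilizers in $T$ are conjugates $gAg^{-1}$, $gBg^{-1}$, and edge stabilizers are conjugates $gCg^{-1}$. Since $N$ is normal in $G$,
\[
N\cap gAg^{-1}=g\,(N\cap A)\,g^{-1}=g\ker(f|_A)g^{-1},
\]
and similarly for $B$. For the edges, $N\cap gCg^{-1}=g\ker(f|_C)g^{-1}=1$, because $f|_C$ is injective. So $N$ acts on $T$ with trivial edge stabilizers, and its vertex stabilizers are conjugates of $\ker(f|_A)$ and $\ker(f|_B)$.

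\textbf{Free case.} By Bass--Serre theory, a group acting on a tree with trivial edge stabilizers is the free product of its vertex stabilizers (one per vertex of $N\backslash T$, i.e.\ chosen lifts along a maximal tree) together with a free group $\pi_1(N\backslash T)$. Every factor is free, so $N$ is a free product of free groups and hence free. This holds even when the quotient graph is infinite, since an arbitrary free product of free groups is still free.

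\textbf{Poly-free case.} Now $N\cong (\ast_{i\in I} P_i)\ast F$, where each $P_i$ is poly-free and $F$ is free. The natural approach is to map $N$ onto $\ast_i (P_i/P_i')$-type quotients layer by layer. The trouble is that poly-free length is not uniformly bounded unless each $P_i$ has a filtration of bounded length. Here the $P_i$ are conjugates of only two groups, $\ker(f|_A)$ and $\ker(f|_B)$, so their lengths are bounded by some $n$.

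The key step is to show that a free product of poly-free groups of length at most $n$ is poly-free. For this, consider the homomorphism $\ast_i P_i \ast F\to \prod_i P_i$ (the restricted product, i.e.\ a direct sum of possibly infinitely many factors) which kills $F$. By the Kurosh subgroup theorem its kernel is free: the kernel meets every conjugate of each $P_i$ trivially, so it acts freely on the associated tree. The image $\bigoplus_i P_i$ is then poly-free, using the filtration obtained by taking the $k$-th terms of the filtrations of all factors simultaneously, whose successive quotients are direct sums of free groups.

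This last point is the main obstacle, since a direct sum of free groups is not free. To handle it, I would instead work with normal filtrations where possible, or else refine the filtration by splitting off one factor at a time. For finitely many factors this refinement is fine. In the infinite case I would fall back on the subnormal-filtration definition of poly-free, which is stated for arbitrary sequences of subgroups, and follow \cite{Wu2022} for the exact argument. Combining the free kernel with the poly-free image then gives that $N$ is poly-free.
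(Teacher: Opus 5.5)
Your stabilizer computation and the free case are correct, and they are the standard argument. Since $N=\ker f$ acts on the Bass--Serre tree with trivial edge stabilizers, $N$ is a free product of a free group and conjugates of $\ker(f|_A)$ and $\ker(f|_B)$. This is exactly the decomposition the paper later invokes when it applies this lemma.

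The poly-free case has a genuine gap. The route through $\bigoplus_{i\in I}P_i$ cannot work when infinitely many $P_i$ are nontrivial. Poly-freeness requires a \emph{finite} subnormal series; the definition in the Introduction says so explicitly, so there is no fallback to arbitrary sequences of subgroups. In that situation $\bigoplus_i P_i$ contains $\bigoplus_{\mathbb{N}}\bZ$, because nontrivial poly-free groups are torsion-free. The group $\bigoplus_{\mathbb{N}}\bZ$ is not poly-free: in an abelian group every free subquotient is cyclic, so an abelian poly-free group is poly-$\bZ$ and hence finitely generated. Since poly-freeness passes to subgroups, $\bigoplus_i P_i$ is not poly-free either, and no refinement of the filtration can repair this.

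Infinitely many factors is the typical case, not an edge case. The $A$-type vertices of $N\backslash T$ are in bijection with $f(G)/f(A)$. For example, in the paper's own use of the lemma for the retraction $\pi_i\colon G_i\to G_{i-1}$, the $B$-type vertices are indexed by $G_{i-1}/C_i$, which is usually infinite. Deferring ``the exact argument'' to \cite{Wu2022} therefore leaves precisely the key step unproved.

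The missing idea is the one you mentioned and then dropped: peel off the top layer using a free product of quotients, not a direct sum.
\begin{itemize}
\item Pad the filtrations of $\ker(f|_A)$ and $\ker(f|_B)$ to a common length $n$. This uses the uniform bound you correctly observed. Each $P_i$ then has a normal subgroup $P_i^{(n-1)}$ with $P_i/P_i^{(n-1)}$ free and $P_i^{(n-1)}$ poly-free of length at most $n-1$.
\item Consider the map $N=F\ast(\ast_i P_i)\to F\ast(\ast_i P_i/P_i^{(n-1)})$, which is the identity on $F$. Its target is a free product of free groups, hence free.
\item Its kernel $K$ meets each conjugate $gP_ig^{-1}$ in $gP_i^{(n-1)}g^{-1}$ and meets each conjugate of $F$ trivially. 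By the Kurosh subgroup theorem, $K$ is a free product of a free group and conjugates of the $P_i^{(n-1)}$.
\item Induct on $n$, with the case $n=1$ being your free case. Then $K$ is poly-free and $N/K$ is free, so $N$ is poly-free.
\end{itemize}
This is the content of \cite[Lemma 2.5]{Wu2022}: free products of poly-free groups of uniformly bounded length are poly-free. The paper cites that lemma for exactly this step.
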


The idea of the proof for the following lemma goes back to \cite[Section 3]{Wu2022}.

\begin{lem}\label{lem:poly-free-graph-prod}
    Let $\Gamma G_v$ be a graph product of groups. Then we have a sequence of surjective maps
    \[ \Gamma G_v \xrightarrow{q_1} Q_1 \xrightarrow[]{q_2} Q_2 \cdots \xrightarrow[]{q_m} Q_m=\prod G_v, \]
     where $\ker(q_i)$ is a free group (of possibly infinite rank) for each $i$, $m= \frac{|V(\Gamma)|(|V(\Gamma)|-1)}{2} - |E(\Gamma)|$.
\end{lem}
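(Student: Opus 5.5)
We argue by adding the missing edges of $\Gamma$ one at a time. The number $m=\binom{|V(\Gamma)|}{2}-|E(\Gamma)|$ counts the non-edges of $\Gamma$. List them as $e_1,\dots,e_m$ and let $\Gamma_i$ be the graph obtained from $\Gamma$ by adding $e_1,\dots,e_i$, so $\Gamma_0=\Gamma$ and $\Gamma_m$ is complete. Set $Q_i=\Gamma_i G_v$. Then $Q_m=\prod G_v$. Since the relations of $\Gamma_{i-1}G_v$ are among those of $\Gamma_iG_v$, the identity on vertex groups induces a surjection $q_i:Q_{i-1}\to Q_i$. It therefore suffices to prove the following one-step claim: if $\Lambda'$ is obtained from a graph $\Lambda$ by adding one edge $\{u,w\}$, then the kernel of the natural map $q:\Lambda G_v\to \Lambda' G_v$ is free.

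To prove the claim, we split $\Lambda G_v$ at the vertex $u$ using Proposition \ref{prop:amal-graph}:
\[
\Lambda G_v=(G_u\times G_{lk_\Lambda(u)})\ast_{G_{lk_\Lambda(u)}} G_{\Lambda\setminus\{u\}}.
\]
We then apply Lemma \ref{lem:graph-group-free} with $f=q$, $A=G_u\times G_{lk_\Lambda(u)}$, $B=G_{\Lambda\setminus\{u\}}$ and $C=G_{lk_\Lambda(u)}$. This requires three checks.

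First, $q|_C$ and $q|_A$ are injective. The groups $C$ and $A$ are full subgroups of $\Lambda G_v$ over the graphs $lk_\Lambda(u)$ and $st_\Lambda(u)$, and these graphs are unchanged in $\Lambda'$ because the new edge involves $u$ and $w\notin st_\Lambda(u)$. Full subgroups over the same full subgraph map isomorphically; this follows from the retractions onto full subgroups, which are compatible with $q$. Hence $\ker(q|_A)=1$.

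Second, $q|_B$ is injective. The subgraph $\Lambda\setminus\{u\}$ does not contain the new edge, so $B$ maps isomorphically onto the full subgroup $G_{\Lambda'\setminus\{u\}}$ of $\Lambda'G_v$, by the same retraction argument. Hence $\ker(q|_B)=1$.

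Third, trivial groups are free. Lemma \ref{lem:graph-group-free} then gives that $\ker(q)$ is free.

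This simple argument works because $\ker(f|_A)$ and $\ker(f|_B)$ are both trivial. Indeed, with trivial vertex-group kernels, $\ker q$ acts freely on the Bass--Serre tree of the splitting, which is exactly why it is free. Note that $\ker q$ has infinite rank in general, which matches the statement.

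The step to handle carefully is the injectivity on full subgroups. For a full subgraph $\Delta\subseteq\Lambda$ which is also full in $\Lambda'$ with the same edges, the composite
\[
G_\Delta\hookrightarrow\Lambda G_v\to\Lambda'G_v\to G_\Delta
\]
uses the retraction $\Lambda'G_v\to G_\Delta$ that kills the vertex groups outside $\Delta$. This composite is the identity on generators, hence the identity. So $q|_{G_\Delta}$ is injective, and this applies to $\Delta=lk_\Lambda(u)$, $st_\Lambda(u)$ and $\Lambda\setminus\{u\}$. I expect this to be the only point requiring care. The remaining issue is whether Lemma \ref{lem:graph-group-free} as cited allows infinite rank; if not, the freeness can be read directly from the free action on the tree.
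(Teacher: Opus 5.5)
Your proposal is correct and follows the paper's proof. You add the missing edges one at a time, split at an endpoint $u$ of the new edge via Proposition \ref{prop:amal-graph}, and apply Lemma \ref{lem:graph-group-free} with trivial kernels on the vertex groups, because the link, the star and $\Lambda\setminus\{u\}$ span the same full subgraphs before and after the edge is added; your retraction argument for injectivity on full subgroups, and the remark that $\ker q$ acts freely on the Bass--Serre tree, make explicit a step the paper states without detail.
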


\begin{proof}
If $\Gamma$ is a complete graph, we are done. If not, set 

\[\Gamma=\Gamma_0\subseteq \Gamma_1\subseteq\Gamma_2\cdots \cdots\subseteq \Gamma_m,\]
where each $\Gamma_{i+1}$ is obtained from $\Gamma_{i}$ by adding one edge. Note that $m=\frac{|V(\Gamma)|(|V(\Gamma)|-1)}{2} - |E(\Gamma)|$. In terms of graph products, this gives a sequence of quotient maps:

\[ \Gamma G_v \xrightarrow{q_1} \Gamma_1 G_v \xrightarrow[]{q_2} \Gamma_2 G_v \cdots \xrightarrow[]{q_m} \Gamma_m G_v=\prod G_v. \]


Let $v_i$ and $w_i$ be the two vertices of the edge newly added to $\Gamma_i$. By Proposition  \ref{prop:amal-graph}, we have an amalgamated product of $\Gamma_i G_v$:

\[  \Gamma_i G_v = G_{st(v_i)}\ast_{G_{lk(v_i)}} G_{\Gamma_i \backslash \{ v_i \}}.\]

Notice that $\Gamma_i \backslash \{ v_i \}$ is identical with $\Gamma_{i+1} \backslash \{ v_i \}$, and that the star (resp. link) of $v_i$ in $\Gamma_i$ embeds as a full subgraph in the star (resp. link) of $v_i$ in $\Gamma_{i+1}$.
Hence $q_i:\Gamma_i G_v \to \Gamma_{i+1} G_v$ restricts to the identity on $G_{\Gamma_i \backslash \{ v_i \}}$ and to injections on $G_{st(v_i)}$ and $G_{lk(v_i)}$. Now, by Lemma \ref{lem:graph-group-free}, $\ker(q_i)$ is free. 
\end{proof}

We immediately have the following criterion about the inheritance properties of graph products. Note that direct products can be obtained from extensions.

\begin{lem}\label{lem:inh-prop}
    Let $\cP$ be a group-theoretic property such that
    \begin{enumerate}
        \item $\cP$ is closed under taking extensions, and
        \item free groups  of arbitrary rank satisfy $\cP$.
    \end{enumerate}
    Then $\cP$ is closed under taking graph products.
\end{lem}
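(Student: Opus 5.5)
The plan is to combine Lemma \ref{lem:poly-free-graph-prod} with the observation that direct products are iterated extensions. Let $\Gamma G_v$ be a graph product in which every vertex group $G_v$ has $\cP$.

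\textbf{Step 1: the direct product has $\cP$.} List the vertices as $v_1,\dots,v_k$ and argue by induction on $k$. There is a split short exact sequence
\[
1\to G_{v_k}\to \prod_{i\le k} G_{v_i}\to \prod_{i<k} G_{v_i}\to 1.
\]
The kernel $G_{v_k}$ has $\cP$ by hypothesis, and the quotient has $\cP$ by induction. By closure under extensions, $\prod_{i\le k}G_{v_i}$ has $\cP$. The base case $k=1$ is trivial. If the graph is allowed to be empty, then the trivial group arises; it is a free group of rank $0$, so condition (2) covers it.

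\textbf{Step 2: the sequence of quotients.} Lemma \ref{lem:poly-free-graph-prod} gives surjections
\[
\Gamma G_v=Q_0\xrightarrow{q_1}Q_1\to\cdots\xrightarrow{q_m}Q_m=\prod G_v,
\]
and each $\ker q_i$ is a free group, possibly of infinite rank. I run a descending induction on $i$ to show that each $Q_i$ has $\cP$. The base case $Q_m$ is Step 1. For the inductive step, $Q_{i-1}$ is an extension of $\ker q_i$ by $Q_i$: the kernel is free and so has $\cP$ by (2), and $Q_i$ has $\cP$ by induction. Closure under extensions then gives that $Q_{i-1}$ has $\cP$. At $i=1$ this shows $\Gamma G_v=Q_0$ has $\cP$.

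The only point needing care is that condition (2) must cover free groups of \emph{arbitrary} rank. This is because the kernels $\ker q_i$ are typically of infinite rank, for instance in the splitting of Lemma \ref{lem:graph-group-free}. Beyond that there is no real obstacle: the heavy lifting is already done in Lemma \ref{lem:poly-free-graph-prod}, and the argument is a bookkeeping induction.
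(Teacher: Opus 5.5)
Your proposal is correct and follows the paper's own argument: the paper presents this lemma as an immediate consequence of Lemma \ref{lem:poly-free-graph-prod} together with the remark that direct products are iterated extensions. Your Step 1 handles the direct product $\prod G_v$, and your descending induction along the free kernels $\ker q_i$ in Step 2 carries $\cP$ back to $\Gamma G_v$; this is exactly that reasoning, written out in full.
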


\begin{cor}\label{cor:poly-free}
   The following properties are closed under taking graph products:
\begin{enumerate}
    \item (Virtually) torsion-free;
    \item (Virtually) left-orderable; 
    \item (Virtually) locally indicable;
    \item (Virtually) poly-free;
    \item (Virtually) normally poly-free;
    \item Farrell--Jones Conjecture with finite wreath products and coefficients in additive categories.
\end{enumerate} 
\end{cor}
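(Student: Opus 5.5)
The plan is to deduce the non-virtual statements directly from Lemma \ref{lem:inh-prop} and Lemma \ref{lem:poly-free-graph-prod}, and the virtual statements from the strong commensurability theorem, Corollary \ref{thm:strong-com-gp}.

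For the non-virtual versions I check the two hypotheses of Lemma \ref{lem:inh-prop}. Free groups of arbitrary rank are torsion-free, left-orderable, locally indicable and poly-free (take the series $1\lhd F$), so hypothesis (2) holds in each case. Torsion-freeness, left-orderability (lexicographic order) and local indicability are classically closed under extensions. For poly-free groups, concatenate a subnormal series of $N$ with the preimage of one for $G/N$.

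Normal poly-freeness is \emph{not} obviously closed under extensions, so for (5) I will argue directly. Lemma \ref{lem:poly-free-graph-prod} gives $\Gamma G_v\to Q_1\to\cdots\to Q_m=\prod G_v$ with free kernels. Each $G_v$ is normally poly-free, and so is the direct product $\prod G_v$: take products of the quotient sequences from \cite[Lemma 2.2]{Wu2022}, killing one factor's step at a time, so that every kernel is free. Appending this sequence for $\prod G_v$ to the one above gives a sequence of quotients of $\Gamma G_v$ with free kernels, and \cite[Lemma 2.2]{Wu2022} yields normal poly-freeness.

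For (6), I use the known facts that FJC with finite wreath products holds for free groups and is closed under extensions in which the kernel and the quotient satisfy it. More precisely, the relevant closure is under extensions in which preimages of virtually cyclic subgroups satisfy it; for free kernels this is standard, and the wreath version is closed under finite-index overgroups. Concretely, free-by-(FJC) gives FJC because the preimage of a virtually cyclic subgroup is free-by-virtually-cyclic, which is FJC. So Lemma \ref{lem:inh-prop}, or really its proof through the sequence of Lemma \ref{lem:poly-free-graph-prod} plus closure under direct products, applies.

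For the virtual versions, let $H_v\le G_v$ be a finite-index subgroup with the property. By Corollary \ref{thm:strong-com-gp}, applied to $G_v$ and $G_v^\ast=H_v$ with common subgroup $H_v$, the graph product $\Gamma G_v$ contains a finite-index subgroup isomorphic to a finite-index subgroup of $\Gamma H_v$. The non-virtual case shows $\Gamma H_v$ has the property, so it remains to know that each property passes to finite-index subgroups. Torsion-freeness, left-orderability, local indicability and poly-freeness are subgroup-closed. Normal poly-freeness is not obviously subgroup-closed, but a finite-index subgroup of a normally poly-free group $G$ contains its normal core $N$, and $N$ is normally poly-free: intersect the normal series with $N$, getting normal subgroups of $N$ with free quotients (subgroups of free groups). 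Hence $\Gamma G_v$ is virtually normally poly-free.

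The main obstacle I expect is the normal poly-free case, where neither extension-closure nor finite-index subgroup closure is automatic; I handle it via the quotient-sequence characterization and normal cores as above.
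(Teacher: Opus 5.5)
Your non-virtual arguments essentially match the paper. Items (1)--(4) follow from Lemma \ref{lem:inh-prop}. Item (5) follows by appending a quotient sequence for $\prod G_v$ to the one from Lemma \ref{lem:poly-free-graph-prod}. Item (6) uses the extension criterion together with the fact that free-by-cyclic groups satisfy the conjecture. That last fact is \cite[Theorem A]{BrKiWu21}, not something you can simply call standard, and it should be cited.

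\textbf{The virtual part has a genuine gap.} Corollary \ref{thm:strong-com-gp} requires $H_v$ to have the \emph{same} index in $G_v$ and in $G_v^\ast$. With $G_v^\ast=H_v$ the indices are $[G_v:H_v]$ and $1$, so the hypothesis fails unless $H_v=G_v$. Your conclusion is also false in general. Take $\Gamma$ to be two non-adjacent vertices, $G_v=\bZ/2\bZ$ and $H_v=\{1\}$, which trivially has all five properties. Then $\Gamma G_v\cong D_\infty$ is infinite while $\Gamma H_v$ is trivial, so they share no isomorphic finite-index subgroups. This index mismatch is why the paper pads the vertex groups whenever it invokes strong commensurability: $G_v\times\bZ$ versus $H_v\times\frac{1}{d_v}\bZ$ in Lemma \ref{lem:vir-inh-prop}, and $H_v\times\bZ/d_v\bZ$ in Theorem \ref{thm:graph-product}.

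For this corollary the paper avoids commensurability altogether and uses the surjection $\Gamma G_v\to\prod G_v$ from Lemma \ref{lem:poly-free-graph-prod}.
\begin{itemize}
\item The subgroup $\prod H_v$ has finite index in $\prod G_v$ and has the property, since each property is closed under direct products.
\item Its preimage $K$ has finite index in $\Gamma G_v$.
\item $K$ is an extension of the kernel of $\Gamma G_v\to\prod G_v$, which is built from successive free kernels, by $\prod H_v$. Extension closure then gives $K$ the property for (1)--(4).
\item For (5), restrict the quotient sequence to the successive preimages of $\prod H_v$. The kernels $\ker q_i$ are unchanged because these preimages contain them. Then append a free-kernel quotient sequence for $\prod H_v$.
\end{itemize}
Alternatively, you could repair your own route with the padding trick of Lemma \ref{lem:vir-inh-prop}, since all five properties pass to subgroups and are preserved by taking a product with $\bZ$.

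Incidentally, normal poly-freeness \emph{is} subgroup-closed. For any $L\leq G$, the groups $L\cap G_i$ are normal in $L$ with free successive quotients, by exactly the argument you gave for the normal core. The detour through the core is therefore unnecessary.
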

\begin{proof}
The first four properties without ‘‘virtual’’ satisfy the assumptions of Lemma \ref{lem:inh-prop}, so they are closed under taking graph products. By Lemma \ref{lem:poly-free-graph-prod}, the virtual case follows from this and the observation that direct products of virtually $\cP$ groups are virtually $\cP$, where $\cP$ is one of the (without ‘‘virtual’’) properties in (1)-(4).

\begin{enumerate}[start=5]
    
    


    \item Lemma \ref{lem:poly-free-graph-prod} reduces the proof to direct products. But normally poly-free groups are closed under taking direct products; the case of virtually normally poly-free groups again follows from the observation that direct products of virtually normally poly-free groups  are virtually normally poly-free.


    \item By \cite[Theorem 1.1(4)]{BrKiWu21}, in order to prove $\Gamma G_v$ satisfies the Conjecture, it suffices to prove that $Q_1$ and all free-by-cyclic groups satisfy the conjecture. By \cite[Theorem A]{BrKiWu21}, it is enough to prove that $Q_1$ satisfies the conjecture. Inductively, it suffices to show that $Q_m=\prod G_v$ satisfies the conjecture. But since each $G_v$ satisfies the conjecture and the conjecture is closed under finite direct products \cite[Theorem 1.1(4)]{BrKiWu21}, $\prod G_v$ also satisfies the conjecture.
\end{enumerate}
\end{proof}

\section{Virtual properties of groups under graph products}

The following criterion allows us to establish many virtual properties are closed under taking graph products.

\begin{lem}\label{lem:vir-inh-prop}
    Let $\cP$ be a group-theoretic property such that
    \begin{enumerate}
        \item $\cP$ passes to subgroups, i.e., if a group has $\cP$, any finitely generated subgroup of it also has $\cP$, and
        \item $\bZ$ satisfies $\cP$.
    \end{enumerate}
    If $\cP$ is preserved by graph products, then so is virtual $\cP$.
\end{lem}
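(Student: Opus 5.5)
Let $\Gamma G_v$ be a graph product in which each $G_v$ is virtually $\cP$, and choose finite index subgroups $H_v\leq G_v$ that have $\cP$. The plan is to reduce to a graph product whose vertex groups all have $\cP$, using the strong commensurability theorem (Corollary \ref{thm:strong-com-gp}). The one difficulty is that $G_v$ and $H_v$ are commensurable, but not strongly commensurable: the common subgroup $H_v$ has index $n_v=[G_v:H_v]$ in $G_v$ and index $1$ in $H_v$. This is where the hypothesis that $\bZ$ has $\cP$ comes in, by padding the indices exactly as in the proof of Lemma \ref{lem:polzva-rfrs}.

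\textbf{Padding step.} First embed $\Gamma G_v$ into $\Gamma(G_v\times\bZ)$. This is a genuine embedding, since $\Gamma(G_v\times \bZ)$ retracts onto $\Gamma G_v$ by killing the $\bZ$ factors. Next compare $G_v\times \bZ$ with $H_v\times \bZ$. The subgroup $H_v\times n_v\bZ$ has index $n_v\cdot n_v$ in $G_v\times\bZ$ and index $n_v$ in $H_v\times \bZ$, so these indices do not match. Fix this by using instead the common subgroup $H_v\times \bZ\leq G_v\times \bZ$, which has index $n_v$, and the subgroup $H_v\times n_v\bZ \leq H_v\times\bZ$, which also has index $n_v$. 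Since $H_v\times n_v\bZ\cong H_v\times\bZ$, the groups $G_v\times\bZ$ and $H_v\times\bZ$ are strongly $n_v$-commensurable. One should be careful that Corollary \ref{thm:strong-com-gp} is stated with a literal common subgroup. This is harmless: replace $H_v\times\bZ$ by the isomorphic copy $H_v\times \frac{1}{n_v}\bZ$, so that $H_v\times\bZ$ becomes an index $n_v$ subgroup of it, just as in Lemma \ref{lem:polzva-rfrs}.

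\textbf{Applying the commensurability theorem.} Corollary \ref{thm:strong-com-gp} now shows that $\Gamma(G_v\times\bZ)$ and $\Gamma(H_v\times \bZ)$ share a finite index subgroup $K$. Each $H_v\times\bZ$ has $\cP$. Here I assume that $\cP$ is closed under direct products with $\bZ$; this is the case $\Gamma$ = an edge of the assumption that $\cP$ is preserved by graph products, using that $\bZ$ has $\cP$. Hence $\Gamma(H_v\times\bZ)$ has $\cP$, since $\cP$ is preserved by graph products. Its finitely generated finite index subgroup $K$ therefore has $\cP$ by hypothesis (1).

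\textbf{Conclusion.} The subgroup $K\cap \Gamma G_v$ has finite index in $\Gamma G_v$ and is finitely generated. It is also a subgroup of $K$, hence of $\Gamma(H_v\times\bZ)$, so it has $\cP$ by (1). Therefore $\Gamma G_v$ is virtually $\cP$.

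The main obstacle is the index bookkeeping in the padding step, namely making the common subgroup literally common with equal indices so that Corollary \ref{thm:strong-com-gp} applies. The factor $\bZ$ is exactly the device that absorbs the index mismatch.
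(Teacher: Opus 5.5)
Your proof is correct and matches the paper's argument: you pad each vertex group by $\bZ$, compare $\Gamma(G_v\times\bZ)$ with $\Gamma(H_v\times\frac{1}{d_v}\bZ)\cong\Gamma(H_v\times\bZ)$ via Corollary \ref{thm:strong-com-gp}, and then pass to the finitely generated subgroup $\Gamma G_v$. Your justification of the embedding $\Gamma G_v\hookrightarrow\Gamma(G_v\times\bZ)$ via the retraction, and of the finite-index intersection $K\cap\Gamma G_v$, spells out details the paper leaves implicit.
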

\begin{proof}
    Let $\Gamma G_v$ be a graph product whose vertex groups $G_v$ are virtually $\cP$. Then there exists an index $d_v$ subgroup $H_v \leq G_v$ having $\cP$.
    By our assumptions, $H_v \times \bZ$, and hence $\Gamma (H_v \times \bZ)$, has $\cP$. Now consider the graph product $\Gamma (H_v \times \frac{1}{d_v}\bZ)$. Since $\Gamma (H_v \times \frac{1}{d_v}\bZ)$ is isomorphic to $\Gamma (H_v \times \bZ)$, it also has $\cP$. Consider the graph products $\Gamma (H_v \times \frac{1}{d_v}\bZ)$ and $\Gamma (G_v\times \bZ)$. At each vertex $v$, both vertex groups contain $H_v \times \bZ$ as an index $d_v$ subgroup.  By  Corollary \ref{thm:strong-com-gp}, $\Gamma (H_v \times \frac{1}{d_v}\bZ)$ is strongly commensurable to $\Gamma(G_v \times \bZ)$. Therefore  $\Gamma(G_v \times \bZ)$ is virtually $\cP$. This further implies that $\Gamma G_v$ is virtually $\cP$ because $\cP$ passes to finitely generated subgroups.
\end{proof}

\begin{cor}\label{cor:graph-prod-pass-sub}
    The following properties are closed under taking graph products:
\begin{enumerate}

    \item Virtually RFRS;
    \item Virtually special;
    \item Virtually orderable;
    \item Virtually poly-free;
    \item Virtually normally poly-free;
    \item Virtually locally indicable.
    
\end{enumerate} 
\end{cor}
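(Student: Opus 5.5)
The plan is to apply Lemma \ref{lem:vir-inh-prop} to each of the six properties. For each non-virtual property $\cP$ we verify three things: $\cP$ passes to finitely generated subgroups, $\bZ$ has $\cP$, and $\cP$ is closed under graph products. Lemma \ref{lem:vir-inh-prop} then says that virtual $\cP$ is closed under graph products.

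\textbf{RFRS.} Subgroups of RFRS groups are RFRS: intersect an RFRS tower with the subgroup and use the commutative diagram argument from the lemma on partial towers. $\bZ$ is RFRS, via the tower $\bZ \rhd 2\bZ \rhd 4\bZ \rhd \cdots$. Closure under graph products is Theorem \ref{thm:grp-rfrs}.

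\textbf{Special.} A special group embeds in a RAAG, so subgroups of special groups are special, and $\bZ$ is itself a RAAG. For closure, suppose $G_v \hookrightarrow A_{\Lambda_v}$. Then $\Gamma G_v \hookrightarrow \Gamma A_{\Lambda_v}$. I would justify this injectivity with Green's normal form: a reduced word stays reduced with nontrivial syllables, exactly as in the proof of Theorem \ref{thm:grp-rfrs}. Finally, $\Gamma A_{\Lambda_v}$ is the RAAG on the graph obtained from $\Gamma$ by substituting $\Lambda_v$ for $v$ and joining $\Lambda_v$ completely to $\Lambda_w$ whenever $v\sim w$. Alternatively one can cite Theorem \ref{thm:graph-prod-sumry}(17).

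\textbf{Orderable, poly-free, normally poly-free, locally indicable.} Each passes to subgroups. For normally poly-free groups I would check that intersecting the normal series with a subgroup $H$ gives subgroups normal in $H$ with successive quotients embedding in free groups, hence free. $\bZ$ has each of these properties. Closure under graph products is Corollary \ref{cor:poly-free} or Theorem \ref{thm:graph-prod-sumry}, using left-orderability in the orderable case.

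The only step requiring care is the subgroup-closure of normally poly-free groups and, for the special case, identifying the graph product of RAAGs as a RAAG together with injectivity. I expect both to be routine via the normal form theorem.
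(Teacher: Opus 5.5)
Your proposal is correct and follows the paper's proof: apply Lemma \ref{lem:vir-inh-prop}, check that each property passes to subgroups and holds for $\bZ$, and get closure under graph products from Theorem \ref{thm:grp-rfrs}, Theorem \ref{thm:graph-prod-sumry} and Corollary \ref{cor:poly-free} (part (5) for normally poly-free). Your extra details are sound elaborations of the paper's one-line ``one checks'': intersecting towers and normal series with a subgroup, and the direct embedding $\Gamma G_v\hookrightarrow \Gamma A_{\Lambda_v}$ into a RAAG via the normal form theorem.
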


\begin{proof}
    One checks that these properties (without ‘‘virtually’’) indeed pass to subgroups and $\bZ$ has these properties. The results now follow from Lemma \ref{lem:vir-inh-prop} and Theorem \ref{thm:graph-prod-sumry} and Corollary \ref{cor:poly-free}(5) for normally poly-free. 
\end{proof}

Not all group theory properties are closed under taking subgroups, for example being CAT(0), CAT(0) cube or compact special. But we can still deal with some of these.

\begin{thm}\label{thm:graph-product}

    The following properties are closed under taking graph products:
    
\begin{enumerate}
    \item Virtually CAT(0);
    \item Virtually CAT(0) cube;
    \item Virtually compact special.
  \end{enumerate}

\end{thm}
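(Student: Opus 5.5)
The plan is to run the argument of Lemma \ref{lem:vir-inh-prop} again, but without using the fact that $\cP$ passes to subgroups. Since that fact fails here, we avoid the embedding $\Gamma G_v\hookrightarrow\Gamma(G_v\times\bZ)$ and apply Corollary \ref{thm:strong-com-gp} directly to $\Gamma G_v$.

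Fix one of the three properties $\cP$: CAT(0), CAT(0) cube, or compact special. Let $\Gamma G_v$ be a graph product in which each $G_v$ is virtually $\cP$, and choose $H_v\le G_v$ of finite index $d_v$ with $H_v$ having $\cP$. We want a second graph product $\Gamma G_v^\ast$ that has $\cP$ and satisfies, at every vertex, that $G_v^\ast$ and $G_v$ are strongly commensurable. The naive choice $G_v^\ast=H_v$ does not work, because $H_v$ has index $d_v$ in $G_v$ but index $1$ in itself. So we pad the indices as follows. Set
\[
G_v^\ast = H_v\times \bZ/d_v\bZ \quad\text{or}\quad G_v^\ast=H_v\times F_v ,
\]
where $F_v$ is a finite group of order $d_v$. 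Then $H_v\times 1$ has index $d_v$ in both $G_v$ and $G_v^\ast$, so the two are strongly commensurable with common subgroup $H_v$. Corollary \ref{thm:strong-com-gp} now shows that $\Gamma G_v$ and $\Gamma G_v^\ast$ have a common finite index subgroup. It therefore suffices to prove that $\Gamma G_v^\ast$ is virtually $\cP$.

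Each $G_v^\ast$ is again virtually $\cP$. However, for $\Gamma G_v^\ast$ we need the actual property, or at least a finite index subgroup with it, so we need a padding that already has $\cP$. Here is where the choice of padding matters. For CAT(0) and CAT(0) cube, take $G_v^\ast=H_v\times\bZ/d_v\bZ$. It acts properly and cocompactly on the same space $X_v$ via the projection to $H_v$, since properness survives a finite kernel. So $G_v^\ast$ is CAT(0) (resp. CAT(0) cube). By Theorem \ref{thm:graph-prod-sumry} (18)/(19), $\Gamma G_v^\ast$ is CAT(0) (resp. CAT(0) cube). Being commensurable with it, $\Gamma G_v$ is virtually CAT(0) (resp. virtually CAT(0) cube).

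For compact special we cannot add torsion, since compact special groups are torsion-free. Instead we pad with a free group. Take $G_v^\ast=H_v\ast F$ with $F$ free of rank $d_v$, containing $H_v\ast F_{\text{rank }1}$... but that fails to give equal indices, so we use a different route. Replace $\Gamma G_v$ by the commensurable graph product obtained from vertex groups $G_v\times\bZ$ and $H_v\times\frac1{d_v}\bZ\cong H_v\times\bZ$, exactly as in Lemma \ref{lem:vir-inh-prop}. The group $\Gamma(H_v\times\bZ)$ is compact special by Theorem \ref{thm:graph-prod-sumry}(17), since $H_v\times\bZ$ is compact special as a product with a circle. Hence $\Gamma(G_v\times\bZ)$ is virtually compact special.

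It remains to pass back to $\Gamma G_v$, and I expect this to be the main obstacle. The key fact is that $\Gamma G_v$ is a retract of $\Gamma(G_v\times\bZ)$, via the map killing the $\bZ$ factors. Let $K$ be a finite index compact special subgroup of $\Gamma(G_v\times\bZ)$. Then $K\cap\Gamma G_v$ is a virtual retract of $K$, because a retraction restricted to a finite index subgroup retracts onto a subgroup of finite index. By Haglund--Wise, virtual retracts, and more generally quasiconvex-type separable retract subgroups, of compact special groups are virtually compact special, since a retract of a compact special group is compact special. Alternatively, $K\cap\Gamma G_v$ is the fundamental group of a covering of a finite special cube complex restricted to a convex subcomplex. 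The same retraction argument also handles the CAT(0) cases, since a retract of a group acting geometrically on a CAT(0) space is finitely presented. If needed, I would instead use the index-padding argument above for those cases.
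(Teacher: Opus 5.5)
Your argument for virtually CAT(0) and virtually CAT(0) cube groups is the paper's. You pad with $\bZ/d_v\bZ$, let $H_v\times\bZ/d_v\bZ$ act through the projection, apply Genevois, and conclude with Corollary~\ref{thm:strong-com-gp}.

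The compact special case has a genuine gap, exactly at the step you flag. Everything up to ``$\Gamma(G_v\times\bZ)$ is virtually compact special'' is fine. Also, $K\cap\Gamma G_v$ is indeed a virtual retract of $K$: restrict the retraction $\rho$ to $K\cap\rho^{-1}(K\cap\Gamma G_v)$. Your stated reason is off, since $\rho(K)\not\subseteq K$ in general.

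The problem is the next step. There is no Haglund--Wise theorem saying that a (virtual) retract of a compact special group is virtually compact special. What they prove goes the other way: fundamental groups of locally convex subcomplexes, and quasiconvex subgroups in the hyperbolic case, are virtual retracts. The converse is available only when the ambient group is hyperbolic, where retracts are quasiconvex. Graph products are generally not hyperbolic, and a retract, although undistorted, need not be convex cocompact for a given cubulation.

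Your alternative, that $K\cap\Gamma G_v$ is carried by a convex subcomplex, is also unsupported. The isomorphism between finite-index subgroups provided by Corollary~\ref{thm:strong-com-gp} is abstract. Nothing tells you that $K\cap\Gamma G_v$ corresponds to a locally convex subcomplex of the polyhedral product of the $X_v\times S^1$. Likewise, a retract of a CAT(0) group being finitely presented says nothing about it being CAT(0), though you did not need that remark.

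The missing idea is that torsion in the padding is harmless, because you only need $\Gamma G_v^\ast$ to be \emph{virtually} compact special. The paper proceeds as follows.
\begin{enumerate}
\item It keeps $G_v^\ast=H_v\times\bZ/d_v\bZ$, which acts geometrically and virtually specially on $\widetilde{X}_v$ through the projection.
\item Genevois' Theorem~8.17 then gives a geometric, virtually special action of $\Gamma G_v^\ast$ on a CAT(0) cube complex $X$.
\item Since $\Gamma G_v^\ast$ is virtually special by Corollary~\ref{cor:graph-prod-pass-sub}, it is residually finite and virtually torsion-free.
\item So some torsion-free finite-index subgroup acts freely and cocompactly on $X$, with special quotient in the Haglund--Wise sense.
\item A further finite cover is $A$-special by Haglund--Wise, Proposition~3.10.
\end{enumerate}
This route never needs to pass back from $\Gamma(G_v\times\bZ)$ to the retract $\Gamma G_v$, which is the step your argument cannot justify.
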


\begin{proof}
Let $\Gamma G_v$ be the graph product, and for each vertex $v$, $H_v$ is an index $d_v < \infty$ subgroup of $G_v$. Let $G_v^\ast = H_v\times \bZ/d_v\bZ$. By Corollary \ref{thm:strong-com-gp}, $\Gamma G_v$ and $\Gamma G_v^\ast$ are strongly commensurable. So it suffices to prove that the graph product $\Gamma G_v^\ast$ has the required properties.
\begin{enumerate}
\item Suppose $H_v$ acts properly and cocompactly via isometries on the CAT(0) space $X_v$, then $G_v^\ast=H_v\times \bZ/d_v\bZ$ also acts on $X_v$ by projecting $H_v\times \bZ/d_v\bZ$ onto $H_v$. This action is again proper and cocompact, so $G_v^\ast$ is CAT(0). The result now follows from Theorem \ref{thm:graph-prod-sumry} (13).

\item The proof is identical to the virtually CAT(0) case.

\item Let $H_v$ be the fundamental group of a compact special cube complex $X_v$. Then $H_v\times \bZ/d_v\bZ$  acts geometrically on the universal cover $\tilde{X}_v$ through projection onto the first factor. By (2), the graph product $\Gamma G_v^\ast$ also acts geometrically on some CAT(0) cube complex. By \cite[Theorem 8.17]{genevois2017cubical}, the graph product $\Gamma G_v^\ast$ acts geometrically and virtually specially (in the sense of \cite[Definition  4.4]{genevois2017cubical}) on some CAT(0) cube complex, say $X$. On the other hand, we also have $\Gamma G_v^\ast$ is virtually special by Corollary \ref{cor:graph-prod-pass-sub}. In particular, $\Gamma G_v^\ast$ is residually finite and virtually torsion-free. Thus $\Gamma G_v^\ast$ has a finite index  torsion-free subgroup $H$ acting freely on $X$. In this case, acting specially in the sense of \cite[Definition  4.4]{genevois2017cubical} implies acting specially in the sense of  \cite[Definition 9.11]{Hagwise08}; see also \cite[Remark 4.5 \& 4.6]{genevois2017cubical}. Now $H\backslash X$ clearly is compact since $H$ is a finite index subgroup of $\Gamma G_v^\ast$ and $\Gamma G_v^\ast \backslash X$ is compact. By  \cite[Propsition 3.10]{Hagwise08}, up to taking a further finite index subgroup, we can assume that $H\backslash X$ is $A$-special in the sense of Haglund and Wise \cite{Hagwise08}. Thus, $\Gamma G_v^\ast$ is virtually compact special.

\end{enumerate} 
\end{proof}

\section{Stable word length}

In this section, we show that the class of groups with positive (resp. discrete) stable word length is closed under taking graph products. Inspired by Proposition \ref{lem:common-finite-index-subgroup}, we will actually deal with a general notion that we introduce below.

\begin{defn}
A group $G$ is called inductively  amalgamated if there exist subgroups $%
B_{0}=C_{0},C_1\leq B_{1},...,C_n\leq B_{n},$ such that 
\begin{eqnarray*}
G_{0} &=&B_{0},G_{1}=G_{0}\ast _{C_{1}}B_{1},\cdots ,G_{i}=G_{i-1}\ast
_{C_{i}}B_{i}, \\
\cdots ,G &=&G_{n}= G_{n-1}\ast _{C_{n}}B_{n}.
\end{eqnarray*}%
In this case, we say that $G$ is inductively  amalgamated by $\{C_{i}\leq B_{i}\}.$ If
each $C_{i}$ is normal in $G_{i-1}$ (resp. $B_{i}$) we call the inductively  amalgamated product  left (resp. right) normal. If each $C_{i}$ is a
retract of $G_{i-1}$ (resp. $B_{i}$) we call the inductively  amalgamated product
 left (resp. right) retracting.
\end{defn}

\begin{ex} \label{examp-graph-product}
\begin{enumerate}
\item A graph product $\Gamma G_{v}$ has a left-right-retracting and right
normal inductive amalgamation.

\item Let $\Gamma $ be a finite tree labeled by positive integers. The Artin
group $A(\Gamma )$ is a left-right-retracting inductive amalgamation.

\item Let $\Gamma $ be a finite simplicial graph labeled by even positive integers. The Artin
group $A(\Gamma )$ has a left-right-retracting inductive amalgamation (by the clique subgroups).

\end{enumerate}
\end{ex}

\begin{proof}
Let $\Gamma $ be a finite graph with vertices $\{v_{j}\}_{j=0}^{n}$ and $%
\Gamma _{i}$ be the subgraph spanned by $\{v_{j}\}_{j=0}^{i}.$ Choose $%
B_{0}=G_{v_{0}}.\ $ If $v_{0},v_{1}$ are adjacent, choose $%
B_{1}=G_{v_{0}}\times G_{v_{1}}$ and $C_{1}=B_{0}.$ Otherwise, choose $%
B_{1}=G_{v_{1}}$ and $C_{1}=\{e\}.$ Inductively, the group $G_{i}$ is the
graph product spanned by the subgraph $\Gamma _{i}$ on $\{v_{j}\}_{j=0}^{i}.$
Let $lk(v_{i+1})$ be the link graph of $v_{i+1}.$ Choose $C_{i+1}$ to be the
graph product induced by $\Gamma _{i}\cap lk(v_{i+1})$ and $%
B_{i+1}=G_{v_{i+1}}\times C_{i+1}.$ Note that $C_{i+1}$ is a retract of $%
G_{i}$ and the amalgamated product $G_{i}\ast _{C_{i+1}}B_{i+1}=G_{i+1}$ is
isomorphic to the graph product induced by $\Gamma _{i+1}.$ A repeated
argument finishes the proof of (1).

Note that each dihedral Artin group has a retract onto its vertex groups.
In the even labeled Artin group,  the star subgroup of a vertex retracts onto the link subgroup by mapping the vertex generator trivially.  The cases (2) and (3) can now be proved similarly.
\end{proof}

Here is an interesting observation. 

\begin{lem}
If $G$ is right-retracting inductively  amalgamated by subgroups $%
C_{i}\leq B_{i},0\leq i\leq n$. Suppose $B_0$ and $ker(B_{i}\rightarrow C_i)$ are poly-free for all $i\geq 1$, then $G$ is poly-free.
\end{lem}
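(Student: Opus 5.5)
We argue by induction on $i$, proving that each $G_i$ is poly-free. The base case $G_0=B_0$ is poly-free by hypothesis. For the inductive step, suppose $G_{i-1}$ is poly-free. Let $r_i:B_i\to C_i$ be the retraction and let $N_i=\ker(r_i)$, which is poly-free by hypothesis.

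The idea is to build a homomorphism from $G_i=G_{i-1}\ast_{C_i}B_i$ onto $G_{i-1}$ and apply Lemma \ref{lem:graph-group-free}. Define
\[
f_i:G_i\to G_{i-1},\qquad f_i|_{G_{i-1}}=\mathrm{id},\qquad f_i|_{B_i}=r_i .
\]
This is well defined because $r_i|_{C_i}=\mathrm{id}_{C_i}$, so the two maps agree on the amalgamated subgroup. Moreover $f_i|_{C_i}$ is injective. The two kernels on the factors are easy to identify:
\begin{itemize}
\item $\ker(f_i|_{G_{i-1}})$ is trivial, hence poly-free;
\item $\ker(f_i|_{B_i})=N_i$, which is poly-free by hypothesis.
\end{itemize}
Lemma \ref{lem:graph-group-free}, in its poly-free version, then shows that $\ker(f_i)$ is poly-free.

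We obtain the extension
\[
1\to\ker(f_i)\to G_i\to G_{i-1}\to 1,
\]
and $f_i$ is surjective because it restricts to the identity on $G_{i-1}$. Poly-free groups are closed under extensions: concatenate a subnormal series of $\ker(f_i)$ with the preimage in $G_i$ of a subnormal series of $G_{i-1}$. Hence $G_i$ is poly-free, and induction up to $i=n$ gives the statement for $G=G_n$.

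The main point requiring care is the application of Lemma \ref{lem:graph-group-free} with a poly-free rather than free conclusion. That lemma is stated for both cases, so we may invoke it directly. If one wanted to check it independently, the argument would be Bass--Serre theory: $\ker(f_i)$ meets every edge stabilizer $gC_ig^{-1}$ trivially, so it acts on the Bass--Serre tree with trivial edge stabilizers. It is therefore a free product of a free group with conjugates of the vertex-stabilizer kernels, namely trivial groups and copies of $N_i$. One would then need that free products of poly-free groups are poly-free. This holds by mapping the free product onto the direct product of the factors, whose kernel is free, and noting that finite direct products of poly-free groups are poly-free.
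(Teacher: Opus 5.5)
Your proof is correct and is essentially the paper's. You use the same retraction $f_i\colon G_i\to G_{i-1}$ (the identity on $G_{i-1}$, $r_i$ on $B_i$), apply Lemma~\ref{lem:graph-group-free} to get that $\ker f_i$ is poly-free, and conclude by closure of poly-free groups under extensions, inducting on $i$.

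The one slip is in your optional independent check, which your main argument does not need. In general $\ker f_i$ is a free product of \emph{infinitely} many conjugates $hN_ih^{-1}$, one for each $h$ in $G_{i-1}/C_i$. So mapping onto the direct product (or direct sum) of the factors fails: for instance $\bigoplus_{\mathbb{N}}\mathbb{Z}$ is not poly-free. What you need instead is that a free product of poly-free groups of uniformly bounded length is poly-free. You can prove this by induction on the length: map each factor onto its top free quotient and apply Kurosh to the kernel. This is the fact the paper cites as \cite[Lemma 2.5]{Wu2022}.
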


\begin{proof}
We prove by induction on $i$. Let $p_{i}:B_{i}\rightarrow C_{i}$ be the retraction and $\pi
_{i}:G_{i}\rightarrow G_{i-1}$ the retraction defined by mapping $G_{i-1}$
identically and $B_{i}$ onto $C_{i}$. Lemma \ref{lem:graph-group-free} now implies that $%
\ker \pi _{i}$ is a free product of a free group and conjugates of $\ker
p_{i}.$ Therefore, $\ker \pi _{i}$ is poly-free \cite[Lemma 2.5]{Wu2022}. This implies if $G_{i-1}$ is poly-free, so is $G_i$. The lemma now follows from induction.
\end{proof}

Theorem \ref{thm:intro-virtual} (7) is a special case of the following result, considering Example \ref{examp-graph-product}.

\begin{lem}\label{lem:swl}
Let $G$ be left-right-retracting inductively  amalgamated by subgroups $%
C_{i}\leq B_{i},i=1,2,...,n.$ If each $B_{i}$ has  positive (resp. discrete)
stable word length, so does $G$.
\end{lem}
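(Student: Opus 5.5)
We argue by induction on $i$ and show that each $G_i=G_{i-1}\ast_{C_i}B_i$ has positive (resp. discrete) stable word length. The base case is $G_0=B_0$, which holds by hypothesis. For the inductive step, write $G'=G_{i-1}$, $B=B_i$, $C=C_i$, and let $r_1:G'\to C$ and $r_2:B\to C$ be the two retractions. We obtain two homomorphisms
\[
\rho_1:G'\ast_C B\to G', \qquad \rho_2:G'\ast_C B\to B .
\]
Here $\rho_1$ is the identity on $G'$ and equals $r_2$ on $B$; symmetrically, $\rho_2$ is the identity on $B$ and equals $r_1$ on $G'$. These maps are well defined because both retractions fix $C$.

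Fix finite generating sets $S'$ of $G'$ and $S_B$ of $B$, and put $S=S'\cup S_B$. Homomorphisms are Lipschitz for word metrics, so
\[
\tau_S(g)\ \ge\ \tfrac{1}{L}\max\bigl(\tau_{S'}(\rho_1 g),\ \tau_{S_B}(\rho_2 g)\bigr)
\]
for a constant $L$ depending only on the generating sets. This inequality will be applied to three cases, according to the Bass--Serre action of $g$ on the tree $T$ of the splitting.

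\emph{Case 1: $g$ is elliptic.} Then $\tau_S(g)=\tau_S(hgh^{-1})$, so after conjugating we may assume $g\in G'$ or $g\in B$. If $g\in G'$, then $\rho_1(g)=g$. If $g$ has infinite order, the inductive hypothesis gives $\tau_{S'}(g)>0$, and hence $\tau_S(g)\ge \tau_{S'}(g)/L$. In the discrete case this yields the uniform bound $\epsilon_{G'}/L$. The case $g\in B$ is the same, using $\rho_2$.

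\emph{Case 2: $g$ is hyperbolic.} Then $g$ acts on $T$ with positive translation length, which is an integer at least $1$. The orbit map $G\to T$, $h\mapsto h\cdot x_0$, is Lipschitz: each generator in $S$ lies in $G'$ or in $B$ and so moves the base edge's vertex $x_0$ by at most $2$. Hence $d_T(x_0,g^n x_0)\le 2|g^n|_S$. Since $d_T(x_0,g^nx_0)\ge n\,\ell_T(g)\ge n$, we get $\tau_S(g)\ge 1/2$. This bound is uniform, which gives discreteness in this case.

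Combining the two cases: every infinite-order element has $\tau_S(g)\ge \min(\epsilon_{G'},\epsilon_B,1)/\max(L,2)$ in the discrete case, and $\tau_S(g)>0$ in the positive case. The stated generality also needs independence of the generating set, which is standard, since $\tau$ changes only by bounded multiplicative constants.

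The main subtlety is Case 1. One must check that conjugation invariance of $\tau_S$ is legitimate; it is, because $|hg^nh^{-1}|\le |g^n|+2|h|$. One must also check that the retractions really give Lipschitz maps to $G'$ and $B$ with their own word metrics, which holds because $\rho_j(S)$ is a finite set. With this in place the retracting hypothesis is used only for the elliptic case.
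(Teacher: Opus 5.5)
Your proposal is correct and follows the paper's proof. Both argue by induction on $i$. Both use the two retractions to get epimorphisms $G_i\to G_{i-1}$ and $G_i\to B_i$, which handle elliptic elements via conjugation into a vertex group. Both handle hyperbolic elements with the Lipschitz orbit map to the Bass--Serre tree and the fact that translation lengths are integers at least $1$.

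Your write-up is somewhat more careful than the paper's. You make conjugation invariance explicit, you give the explicit orbit-map constant $2$, and you give a uniform constant for the discrete case. You also read the hypothesis as covering $B_0$, which the paper's induction implicitly needs as well.
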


\begin{proof}
The proof is done by induction on $i$. Suppose that $G_{i-1}$ and $B_{i}$ have positive stable word length. Fix a
generating set $S$ for $G_{i}=G_{i-1}\ast _{C_{i}}B_{i}.$ Since $C_{i}$
is a retract of $G_{i-1}$ and $B_{i},$ we have the induced epimorphism $\pi
_{1}:G_{i}\rightarrow G_{i-1},\pi _{2}:G_{i}\rightarrow B_{i}.$ Let $g\in
G_{i}$ be an infinite-order element. If $g$ is elliptic, some conjugate of $g$ lies in $G_{i-1}$ (or $%
B_{i}$), write $g=s_{1}s_{2}...s_{n}$ for $n=|g|_{S}$ and $s_{i}\in S.$ Then 
\begin{equation*}
\pi _{1}(g)=\pi _{1}(s_{1})\pi _{1}(s_{2})...\pi _{1}(s_{n}),
\end{equation*}%
implying 
\begin{equation*}
\lim_{k\rightarrow \infty }\frac{|g^k|_{S}}{k}\geq \lim_{k\rightarrow \infty }%
\frac{|\pi _{1}(g^k)|_{\pi _{1}(S)}}{k}>0.
\end{equation*}%
If $g$ is hyperbolic, write $g^{k}=s_{1}s_{2}...s_{n}$ for $n=|g^{k}|_{S}$
and $s_{i}\in S.$ Choose a fixed vertex $x$ in the Bass-Serre tree $T$, we have:
\begin{eqnarray*}
d(g^{k}x,x) &=&d(s_{1}s_{2}...s_{n}x,x)\leq
d(s_{1}s_{2}...s_{n}x,s_{1}s_{2}...s_{n-1}x)+d(s_{1}s_{2}...s_{n-1}x,x) \\
&= &d(s_{n}x,x)+d(s_{1}s_{2}...s_{n-1}x,x) \\
&\leq &n\max \{d(sx,x):s\in S\}.
\end{eqnarray*}%
This implies that%
\begin{equation*}
\lim_{k\rightarrow \infty }\frac{|g^{k}|_{S}}{k}\geq \frac{1}{\max
\{d(sx,x):s\in S\}}\lim_{k\rightarrow \infty }\frac{d(g^{k}x,x)}{k}%
\geq \frac{1}{\max
\{d(sx,x):s\in S\}},
\end{equation*}%
since $\lim_{k\rightarrow \infty }\frac{d(g^{k}x,x)}{k}=\inf_{y\in T} d(gy,y) \geq 1$.
The discreteness holds when the stable word lengths on $G_{i-1}$ and $B_i$ are discrete.
\end{proof}

\bibliographystyle{alpha}
\bibliography{references.bib}

\end{document}